\documentclass{article}
\usepackage{graphicx}
\usepackage{url}
\usepackage{latexsym,amscd,amssymb, amsmath,graphicx, amsthm,xcolor,mathrsfs,mathtools,amsfonts}   
\usepackage[margin=1in]{geometry}
\usepackage[colorinlistoftodos]{todonotes}

\newtheorem{theorem}{Theorem}[section]
\newtheorem{proposition}[theorem]{Proposition}
\newtheorem{corollary}[theorem]{Corollary}
\newtheorem{lemma}[theorem]{Lemma}

\theoremstyle{definition}
\newtheorem{definition}[theorem]{Definition}
\newtheorem{observation}[theorem]{Observation}

\newtheorem{problem}[theorem]{Problem}
\newtheorem{example}[theorem]{Example}

\newcommand{\fkS}{\mathfrak{S}}
\newcommand{\Pf}{\mathrm{Pf}}
\newcommand{\NSh}{\mathrm{NShPf}}
\newcommand{\OSh}{\mathrm{OShPf}}

\newcommand{\ch}{\mathtt{ch}}
\newcommand{\cl}{\mathscr{C}}

\newcommand{\sort}{\mathtt{sort}}

\newcommand{\Krew}{\mathrm{Krew}}
\newcommand{\OKrew}{\mathrm{OKrew}}
\newcommand{\Sym}{\mathrm{Sym}}
\newcommand{\SymP}{\mathrm{SymP}}
\newcommand{\Gar}{\mathrm{Gar}}
\newcommand{\area}{\mathrm{area}}
\newcommand{\bounce}{\mathrm{bounce}}
\newcommand{\dinv}{\mathrm{dinv}}
\newcommand{\stat}{\mathrm{stat}}
\newcommand{\sh}{\mathtt{sh}}

\newcommand{\CC}{\mathbb{C}}
\newcommand{\NN}{\mathbb{N}}

\newcommand{\C}

\title{Odd Shifted Parking Functions}
\author{Zachary Hamaker, Jesse Kim}
\date{\today}

\begin{document}

\maketitle

\abstract{
Stanley recently introduced the shifted parking function symmetric function $SH_n$, which is the shiftification of Haiman's parking function symmetric function $PF_n$.
The function $SH_n$ lives in the subalgebra of symmetric functions generated by odd power sums.
Stanley showed how to expand $SH_n$ into the  $V$--basis of this algebra, which is indexed by partitions with all parts odd and is analogous to the complete homogeneous (or elementary) basis of symmetric functions.
We introduce \emph{odd shifted parking functions} to give combinatorial and representation-theoretic realizations of the $V$--expansion of $SH_n$, resolving the main open problem in his paper.
Further, we present two representation-theoretic realizations of shiftification allowing us to interpret $SH_n$ as the spin character of a projective representation.
We conclude with further directions, including a relationship between $SH_n$ and Haglund's $(q,t)$--Schr\"oder theorem.}

\section{Introduction}
\label{s:intro}

A \emph{parking function} of size $n$ is a tuple $(p_1,\dots,p_n)$ of positive integers so that the $i$th smallest entry is at most $i$.
The symmetric group $\fkS_n$ acts on parking functions of size $n$ by permuting their entries.
The \emph{parking function symmetric function} $PF_n$ is the Frobenius character of this $\fkS_n$--action on the set $\Pf(n)$ of parking functions of size $n$.
Parking functions have been studied extensively in the combinatorics community (see~\cite{yan2015parking} for a recent survey), in large part due to Haiman's observation that a natural bi-graded version of $PF_n$ is the Frobenius character for the space of diagonal harmonics~\cite{haiman2001hilbert,haiman1994conjectures}.

The \emph{shiftification map} $\sh$, sometimes denoted $\Theta$, is an operator on symmetric functions $\Sym$ defined on power sums by $\sh(p_{2k-1}) = 2p_{2k-1}$ and $\sh(p_{2k}) = 0$.
The image of shiftification is the subalgebra $\SymP$ generated by odd power sums.
The algebra $\SymP$ has several bases, for instance $\{p_\lambda: \lambda\ \mathrm{odd}\}$ where $\lambda$ is \emph{odd} if each $\lambda_i$ is.
The \emph{Schur--$P$ functions} $P_\lambda$, indexed by partitions with all parts distinct, also form a basis analogous to Schur functions.
For $k$ a positive integer, single part Schur--$P$ functions satisfy the relation
\begin{equation}
    \label{eq:R-relation}
    P_{2k} + \sum_{i=1}^{k-1} P_{2i}\cdot P_{2(k-i)} = \sum_{i=1}^k P_{2i-1}\cdot P_{2(k-i)+1}.
\end{equation}
Therefore, with $V_\lambda = \prod_{i} P_{\lambda_i}$ we see $\{V_\lambda: \lambda\ \mathrm{odd}\}$ is a basis of $\SymP$ as well.

Let $\bigwedge(\CC^n)$ be the exterior  algebra and $\fkS_n$ act on it by permuting standard basis elements.
We give an apparently novel representation-theoretic realization of shiftification, closely related to a classical construction appearing in~\cite{stembridge} using the basic spin representation.

\begin{proposition}
    \label{p:shiftification}
    Let $M$ be an $\fkS_n$--module with Frobenius character $f$. 
    Then the Frobenius character of $M \otimes \bigwedge(\CC^n)$ is $\sh(f)$.
    
\end{proposition}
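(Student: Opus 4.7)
The plan is to compute both sides at the level of characters and use that the Frobenius characteristic map is linear in the character. Recall
\[
\mathrm{ch}(M) = \frac{1}{n!}\sum_{\sigma \in \fkS_n} \chi_M(\sigma)\, p_{\cyc(\sigma)},
\]
so since $\chi_{M \otimes \bigwedge(\CC^n)}(\sigma) = \chi_M(\sigma)\cdot \chi_{\bigwedge(\CC^n)}(\sigma)$, it suffices to compute the character of $\bigwedge(\CC^n)$ on $\sigma$ and match it against how $\sh$ acts on $p_{\cyc(\sigma)}$.

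The first step is the standard identity $\chi_{\bigwedge(\CC^n)}(\sigma) = \det(I + \sigma)$, viewing $\sigma$ as the permutation matrix acting on $\CC^n$. Since $\sigma$ is block-diagonal according to its cycle decomposition and its eigenvalues on a cycle of length $c$ are the $c$-th roots of unity, this determinant factors as
\[
\det(I+\sigma) = \prod_{c \in \cyc(\sigma)} \prod_{\zeta^c = 1} (1+\zeta) = \prod_{c \in \cyc(\sigma)} \bigl(1-(-1)^c\bigr),
\]
using $\prod_{\zeta^c=1}(x-\zeta) = x^c-1$ at $x=-1$. Each factor is $2$ when $c$ is odd and $0$ when $c$ is even. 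Consequently $\chi_{\bigwedge(\CC^n)}(\sigma) = 2^{\ell(\cyc(\sigma))}$ if every cycle of $\sigma$ has odd length, and $0$ otherwise.

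The second step is to recognize this multiplier as exactly what $\sh$ does to a power-sum. Because $\sh$ is defined on generators and extended multiplicatively (it is an algebra homomorphism), $\sh(p_\lambda) = 2^{\ell(\lambda)} p_\lambda$ when $\lambda$ is odd and $\sh(p_\lambda) = 0$ otherwise. Thus for every $\sigma \in \fkS_n$,
\[
\chi_{\bigwedge(\CC^n)}(\sigma)\, p_{\cyc(\sigma)} \;=\; \sh(p_{\cyc(\sigma)}).
\]
Plugging this into the Frobenius formula and pulling the (linear) operator $\sh$ outside the sum yields
\[
\mathrm{ch}(M \otimes \bigwedge(\CC^n)) = \sh\!\left(\frac{1}{n!}\sum_{\sigma} \chi_M(\sigma)\, p_{\cyc(\sigma)}\right) = \sh(\mathrm{ch}(M)) = \sh(f),
\]
which is the desired identity.

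The only real content is computing $\det(I+\sigma)$ cycle-by-cycle; everything else is bookkeeping. The ``hard'' (really just delicate) step is ensuring signs and the parity of $c$ line up correctly in the roots-of-unity calculation so that odd cycles produce a factor of $2$, which is precisely the factor $\sh$ introduces on odd power-sum generators.
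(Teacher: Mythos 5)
Your proof is correct, and it takes a genuinely more self-contained route than the paper's. The paper's proof is a two-line citation: it first establishes (Observation~\ref{o:exterior}) that $\ch(\bigwedge(\CC^n)) = 2P_n$ by identifying $\bigwedge^k(\CC^n)$ with the induced module $\CC[\fkS_n]\otimes_{\CC[\fkS_k\times\fkS_{n-k}]}\CC[\beta_1\wedge\cdots\wedge\beta_k]$, whose character is $e_kh_{n-k}$, and then invokes Stembridge's Kronecker-product identity $f*2P_n = \sh f$ (Proposition~\ref{p:shift-is-kronecker}) to convert the tensor product into shiftification. You instead work entirely at the level of class functions: the computation $\chi_{\bigwedge(\CC^n)}(\sigma) = \det(I+\sigma) = \prod_c\bigl(1-(-1)^c\bigr)$ is right (it checks out for $c=1,2,3$ and the sign bookkeeping at $x=-1$ is handled correctly), and matching the resulting multiplier $2^{\ell(\cyc(\sigma))}$ (on odd cycle types, else $0$) against the eigenvalue of $\sh$ on $p_{\cyc(\sigma)}$ is exactly the content of the cited Stembridge identity in this special case. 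So your argument effectively re-proves the external input from scratch, which is a virtue if one wants a self-contained statement; the paper's route has the compensating advantage of exhibiting the graded pieces $\bigwedge^k$ and the identity $2P_n=\sum_k e_kh_{n-k}$, which it reuses later (e.g.\ the even/odd submodules with character $P_n$ feed into Lemma~\ref{l:naive-decomp}). One small presentational point: when you pull $\sh$ outside the sum you are implicitly using that $\sh$ is linear and diagonal on the power-sum basis, which is exactly the paper's definition, so nothing is missing.
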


In~\cite{stanley2024shifted}, Stanley applied shiftification to the parking function symmetric function $PF_n$, constructing the \emph{shifted parking function symmetric function} $SH_n$.
Using symmetric function theory methods, Stanley showed how to expand $SH_n$ into the power sum, Schur--$P$ and $V$--bases of $\SymP$.
Additionally, he introduced \emph{naive shifted parking functions}, which are pairs $(p,\sigma)$ with $p \in \Pf(n)$ and $\sigma \in \{-1,1\}^n$.
Let $\NSh(n)$ be the set of naive shifted parking functions.
By identifying $\sigma$ with a basis element of the exterior algebra $\bigwedge^n(\CC)$, Proposition~\ref{p:shiftification} shows the $\fkS_n$--action on $\CC[\NSh(n)] \cong \CC[\Pf(n)] \otimes \bigwedge^n(\CC)$ has Frobenius character $SH_n$ (see Corollary~\ref{c:naive-frob}).
This gives a combinatorial/representation-theoretic explanation for Stanley's expansion~\cite[Thm. 3.4]{stanley2024shifted} of $SH_n$ into the spanning set $\{V_\lambda:\lambda \vdash n\}$.

Next, we introduce \emph{odd shifted parking functions}, which are triples $(p,\sigma,\tau)$.
Here $p$ is a parking function whose shape is odd, $\sigma$ is a sign function satisfying certain parity constraints and $\tau$ is a non-crossing partial matching of the integers occurring in $p$.
Then $\fkS_n$ acts on the set of odd shifted parking functions $\OSh(n)$ by permuting $p$ and $\sigma$.
A \emph{sorted odd shifted parking function} is a triple $(p,\overline{\sigma},\tau)$ where $p$ is a weakly increasing parking function whose shape is odd, $\tau$ is a non-crossing matching as above and $\overline{\sigma}$ assigns a sign to each integer occurring in $p$ satisfying certain constraints determined by $\tau$.
These objects are defined in detail in Section~\ref{s:main}.
Our main result is:
 
\begin{theorem}
    \label{t:odd}
    The Frobenius character of the $\fkS_n$--action on $\OSh(n)$ is $SH_n$.
    Moreover,
    \begin{equation}
        \label{eq:t-odd}
    SH_n = \sum_{(p,\overline{\sigma},\tau)\ \mathrm{sorted}} V_{\mathrm{shape}(p)}.
    \end{equation}
\end{theorem}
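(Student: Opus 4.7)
My approach rests on an orbit decomposition of the $\fkS_n$-action on $\OSh(n)$, refined by a block-level identification of each orbit's Frobenius character as $V_{\mathrm{shape}(p)}$, and cross-checked against the naive realization $\CC[\NSh(n)]$, whose Frobenius character is $SH_n$ by Proposition~\ref{p:shiftification} applied to $M=\CC[\Pf(n)]$. Establishing~\eqref{eq:t-odd} then delivers the first claim as an immediate corollary.

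First, I would decompose $\CC[\OSh(n)]$ into $\fkS_n$-orbits. Since the matching $\tau$ lives on the set of values appearing in $p$ (not on positions) and the parity constraints on $\sigma$ are value-indexed, each orbit is naturally labeled by a sorted triple $(p,\overline{\sigma},\tau)$, and the stabilizer of $p$ is the Young subgroup $\fkS_\lambda=\fkS_{\lambda_1}\times\cdots\times\fkS_{\lambda_k}$ with $\lambda=\mathrm{shape}(p)$ odd. The orbit module factors over an external tensor product across the blocks of $p$, with non-trivial coupling only between blocks connected by a matched edge in $\tau$.

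Next, my goal is to show that each isolated odd block of length $2j-1$ contributes $P_{2j-1}$ to the Frobenius character, so that multiplicativity yields $V_\lambda=\prod_iP_{\lambda_i}$. The matched pairs in $\tau$ are engineered so that their joint contribution precisely cancels what would otherwise be the $P_{2k}$ and the cross-term $P_{2i}\cdot P_{2(k-i)}$ contributions in relation~\eqref{eq:R-relation}; that is, $\tau$ is the combinatorial avatar of iterating~\eqref{eq:R-relation} to replace even-single-index Schur--$P$ factors by sums of products of odd ones. Summing the identified orbit characters over sorted triples produces~\eqref{eq:t-odd}.

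The main obstacle is the block-level identification of the orbit's Frobenius character as $P_{2j-1}$: one must check that the parity constraints on $\overline{\sigma}$ and the action of the Young subgroup conspire to produce exactly the Schur--$P$ character (and not a sign-twisted variant), and that the matched-pair couplings realize the signs and multiplicities in~\eqref{eq:R-relation}. Should this direct representation-theoretic identification prove delicate, a safer fallback is to construct an $\fkS_n$-equivariant bijection $\Phi:\NSh(n)\to\OSh(n)$ by iteratively applying~\eqref{eq:R-relation} to trade even-multiplicity values for matched pairs of odd-multiplicity blocks; this would establish $\mathrm{Frob}(\CC[\OSh(n)])=SH_n$ outright, and~\eqref{eq:t-odd} then follows purely from the orbit picture.
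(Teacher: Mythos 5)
Your first step --- decomposing $\CC[\OSh(n)]$ into $\fkS_n$-stable pieces indexed by sorted triples $(p,\overline{\sigma},\tau)$, each with Frobenius character $V_{\lambda(p)}$ --- is correct and is exactly Lemma~\ref{l:odd-decomp}: since the action fixes $\tau$, each piece $O_{(p,\overline{\sigma},\tau)}$ is isomorphic to the naive submodule $N_{(p,\overline{\sigma})}$, whose character is $V_{\lambda(p)}$ by the exterior-algebra computation. (A small caveat: these pieces are not single orbits, since permuting signs within a block preserves only the parity of the number of $-1$'s, not their number; this does not affect the argument.)

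The gap is in the second half. Once each piece is known to contribute $V_{\lambda(p)}$, the theorem reduces to the identity $SH_n=\sum_{(p,\overline{\sigma},\tau)\ \mathrm{sorted}}V_{\lambda(p)}$, and your justification of this identity (``the matched pairs are engineered so that their joint contribution precisely cancels\dots'') restates the goal rather than proving it. The actual content is a many-to-many regrouping on both sides: one must (i) partition the sorted \emph{naive} shifted parking functions into classes --- the paper's garages, defined via a lattice path and its induced noncrossing matching of values --- so that each class has character $R_\lambda=\prod_iR_{\lambda_i}$ with $R_{2k}$ given by the left side of~\eqref{eq:R-relation} (Proposition~\ref{p:garage-R}), and (ii) partition the sorted \emph{odd} shifted parking functions into fibers over the same garages so that each fiber has character $R_\lambda$ with $R_{2k}$ given by the right side of~\eqref{eq:R-relation} (Proposition~\ref{p:odd-R}); the identity then follows by comparing the two. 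Neither grouping appears in your proposal, and both require the nontrivial check (Lemma~\ref{l:path-pigeonhole}) that redistributing the multiplicities of two matched values still yields a parking function --- this is where the noncrossing and lattice-path structure is genuinely used, and it is the step most likely to fail if one matches values carelessly. Your fallback of an $\fkS_n$-equivariant bijection $\NSh(n)\to\OSh(n)$ is not safer: a shape-preserving bijection cannot exist (shapes on one side are arbitrary, on the other odd), so a term-by-term matching of sorted objects cannot prove the character identity, and producing a genuinely equivariant bijection on unsorted objects is essentially equivalent to the theorem itself. In short, the proposal names the right guiding idea --- $\tau$ as the combinatorial record of applying~\eqref{eq:R-relation} --- but omits the construction that constitutes the proof.
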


Theorem~\ref{t:odd} gives a combinatorial description of the $V$--basis expansion of $SH_n$, resolving Stanley's main open problem~\cite{stanley2024shifted}.
Stanley also asks for a projective representation on shifted parking functions whose spin character is $SH_n$, concluding `probably this is too much to hope for.'
Adapting work of Stembridge~\cite[\S9]{stembridge}, a construction similar to Proposition~\ref{p:shiftification} using the Clifford algebra in place of the exterior algebra allows us to extend our $\fkS_n$ action on $\OSh(n)$ to a projective representation (see Corollary~\ref{c:odd-projective}).
We show in~\eqref{eq:t-sh} and Proposition~\ref{p:t-odd-sh} that the naive $V$--expansion and Theorem~\ref{t:odd} both extend to a $t$--graded version.

Let $R_{2k}$ equal~\eqref{eq:R-relation}, $R_{2k-1} = P_{2k-1}$ and $R_{\lambda} = \prod_{i} R_{\lambda_i}$. 
Our proof of Theorem~\ref{t:odd} proceeds by grouping terms from the naive shifted parking function description of $SH_n$ using the left-hand-side of~\eqref{eq:R-relation} so that even parts now contribute to $R_{2k}$ terms.
This gives a combinatorial expansion of $SH_n$ into the spanning set $\{R_\lambda:\lambda \vdash n\}$.
We then show our sorted odd shifted parking functions can be obtained by splitting $R_{2k}$'s using the right-hand-side of~\eqref{eq:R-relation} to obtain~\eqref{eq:t-odd}, and in turn the Frobenius character result.

\subsection*{Organization} The remainder of the paper is organized as follows.
In Section~\ref{s:background}, we introduce relevant background.
This includes properties of parking functions plus their naive shifted counterparts, symmetric functions with a focus on $SH_n$ and a brief introduction to projective representations including our representation theoretic explanations of shiftification using exterior algebras and Clifford algebras.
Our main results, combinatorial and representation-theoretic descriptions of the $V_\lambda$ expansion of $SH_n$, are presented in Section~\ref{s:main}.
We conclude with some smaller results and several directions for further exploration in Section~\ref{s:final}, both enumerative and algebraic.

\subsection*{Acknowledgements}
We thank Leigh Foster for suggesting the term \emph{garage}, Sean Griffin for helping us understand the history of diagonal harmonics, Brendon Rhoades for sharing the $\fkS_n$--action on $\NSh(n)$, Richard Stanley for encouragement and John Stembridge for helpful remarks on projective representations.

\section{Background}
\label{s:background}
Let $[n] = \{1,2,\dots,n\}$ and $\fkS_n$ be the permutations of $[n]$.
Note that $\fkS_n$ acts on tuples $(a_1,\dots,a_n)$ by $w \cdot (a_1,\dots,a_n) = (a_{w(1)},\dots,a_{w(n)})$.
Recall an integer partition $\lambda = (\lambda_1,\dots,\lambda_k)$ is a weakly descending tuple of non-negative integers with \emph{size} $\sum \lambda_i$ and length $\ell(\lambda) = \max \{i:\lambda_i > 0\}$.
Let $m_i(\lambda) = |\{j:\lambda_j = i\}$.
We say $\lambda$ is \emph{odd} if $\lambda_i$ is odd when positive and \emph{strict} if $\lambda_1 > \dots > \lambda_{\ell(\lambda)}$.

\subsection{Parking functions}

For $a = (a_1,\dots,a_n) \in \mathbb{R}^n$, let $\sort(a) = \pi \cdot a$ where $\pi \in \fkS_n$ satisfies $a_{\pi(1)} \leq \dots \leq a_{\pi(n)}$.
As stated in the introduction, a \emph{parking function} of \emph{size} $n$ is a tuple $p$ of $n$ positive integers so that for $i \in [n]$ the $i$th entry in $\sort(p)$ is at most $i$.
We say $p$ is \emph{sorted} if $p = \sort(p)$.
Let $\Pf(n)$ be the set of all parking functions of size $n$ and $\overline{\Pf}(n)$ be the set of sorted parking functions of size $n$.
It is a classical result that $|\Pf(n)| = (n{+}1)^{n-1}$ and $|\overline{\Pf}(n)|$ is the $n$th Catalan number $c_n = \frac{1}{n+1}\binom{2n}{n}$.

The \emph{content} of $p \in \Pf(n)$ is $\alpha(p) = (\alpha_1(p),\dots,\alpha_n(p))$ where 
\[
\alpha_k(p) = |\{j \in [n]:p_j = k\}|,
\]
and the \emph{shape} of $p$ is the partition $\lambda(p)$ obtained by reversing $\sort(\alpha)$.
For example, the parking function $p = (4,1,1,3,6,3)$ has $\sort(p) = (1,1,3,3,4,6)$, $\alpha(p) = (2,0,2,1,0,1)$ and $\lambda(p) = (2,2,1,1)$.

Introduced in~\cite[\S 4]{stanley2024shifted}, a \emph{naive shifted parking function} of \emph{size} $n$ is a pair of $n$--tuples $(p,\sigma)$ where $p$ is a parking function and $\sigma \in \{-1,1\}^n$.
Define $\sort(p,\sigma) = (\sort(p),\overline{\sigma})$ where 
\[
\overline{\sigma}_k = \begin{cases}
    \prod_{p_i = k} \sigma_i & \alpha_k(p) > 0\\
    0 & \alpha_k(p) = 0
\end{cases}
\]
is the parity of $\sigma_i$'s where $p_i = k$ when this quantity is defined.
A \emph{sorted naive shifted parking function} of size $n$ is a pair $(p,\overline{\sigma})$ where $p$ is a sorted parking function and $\overline{\sigma} \in \{-1,0,1\}^n$ where $\overline{\sigma}_k = 0$ if and only if $\alpha_k(p) = 0$.
The naive shifted parking function $(p,\sigma)$ inherits its content and shape from $p$.
Let $\NSh(n)$ be the set of naive shifted parking functions of size $n$ and $\overline{\NSh}(n)$ be the set of sorted naive shifted parking functions of size $n$.
It is easy to see $|\NSh(n)| = 2^n (n{+}1)^{n-1}$, while Stanley observes $|\overline{\NSh}(n)|$ is the $n$th large Schr\"oder number~\cite[A006318]{oeis}.
We present a bijective proof of this fact in Section~\ref{ss:final-enumerative}.

\subsection{Symmetric Functions}

Let $\Sym$ denote the algebra of symmetric functions.
We refer the reader to~\cite{macdonald1998symmetric} or~\cite{sagan2013symmetric} for descriptions of the power sum, elementary, homogeneous, Schur symmetric functions and the Hall inner product.

Our results concern $\SymP = \langle p_{2k+1}:k \in \NN\rangle$, the subalgebra of \emph{odd symmetric functions} in $\Sym$.
The \emph{shiftification operator} $\sh: \Sym \to \SymP$ is defined by $h_k \mapsto 2P_k$ where
\begin{equation}
    \label{eq:P-def}
P_k = \frac{1}{2}\sum_{i=0}^k h_ie_{k-i} = \sum_{\substack{i=0\\i \textrm{ odd}} }^k h_ie_{k-i} = \sum_{\substack{i=0\\i \textrm{ even}}}^k h_ie_{k-i},
\end{equation}
or equivalently where $p_{2k-1} \mapsto 2p_{2k+1}$ and $p_{2k} \mapsto 0$ for $k$ a positive integer.
Note $\sh(e_k) = (-1)^k \cdot 2P_k$.
We observe that shiftification is self-adjoint with respect to the Hall inner product.
\begin{proposition}
    \label{p:self-adjoint}
    For $f,g \in \Sym$, $\langle \sh f,g\rangle = \langle f, \sh g\rangle$.
\end{proposition}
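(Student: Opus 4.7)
The plan is to verify the identity directly in the power sum basis, which is simultaneously an eigenbasis for $\sh$ and an orthogonal basis for the Hall inner product. Self-adjointness will then reduce to the trivial observation that a diagonal operator on an orthogonal basis is self-adjoint.

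First, since $\sh$ is defined on the generators $\{p_k\}$ and extends multiplicatively to a ring map on $\Sym$, for any partition $\lambda$ we have $\sh(p_\lambda) = \prod_i \sh(p_{\lambda_i})$. This product equals $2^{\ell(\lambda)} p_\lambda$ when every part of $\lambda$ is odd and vanishes otherwise. In particular, $\sh$ acts on each $p_\lambda$ by a scalar $c_\lambda \in \{0\} \cup \{2^{\ell(\lambda)}\}$ that depends only on $\lambda$, not on which side of the inner product $\sh$ appears.

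Next, writing $f = \sum_\lambda a_\lambda p_\lambda$ and $g = \sum_\lambda b_\lambda p_\lambda$, applying bilinearity together with the standard orthogonality $\langle p_\lambda, p_\mu\rangle = z_\lambda \delta_{\lambda\mu}$ makes both $\langle \sh f, g\rangle$ and $\langle f, \sh g\rangle$ collapse to the same sum
\[
\sum_{\lambda \text{ odd}} 2^{\ell(\lambda)}\, a_\lambda\, b_\lambda\, z_\lambda.
\]

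There is essentially no obstacle to this argument; the whole proof is two short computations that agree because $\sh$ is a scalar operator on each $p_\lambda$. If one wants a basis-free phrasing, the same observation can be recorded as: $\sh$ is self-adjoint because it is the orthogonal projection onto $\SymP$ (under the Hall inner product) rescaled by the diagonal operator $p_\lambda \mapsto 2^{\ell(\lambda)} p_\lambda$, both of which are manifestly self-adjoint in the power sum basis.
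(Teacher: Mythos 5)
Your proof is correct and is exactly the paper's argument, just written out in full: the paper's one-line proof likewise observes that $\sh$ is diagonal on the power sum basis, which is orthogonal for the Hall inner product. The extra detail you supply (the explicit eigenvalues $2^{\ell(\lambda)}$ for $\lambda$ odd and $0$ otherwise, and the collapsed sum) is a faithful expansion of what the paper leaves implicit.
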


\begin{proof}
    Since the power sum basis is orthogonal with respect to $\langle \cdot,\cdot\rangle$ and $\sh$ is diagonal with respect to the power sum basis the result follows.
\end{proof}

Define the generating functions
\[
A = P_1 t+P_3t^3+P_5t^5 + \dots \quad \mbox{and} \quad B = P_2 t^2+P_4 t^4 + P_6 t^6 + \dots,
\]
which by~\cite[Lem~3.1]{stanley2024shifted} satisfy the relation
\[
B = \frac{-1 + \sqrt{1 + 4A^2}}{2},
\]
or equivalently $A^2 = B^2 + B$.
Taking the coefficient of $t^{2k}$ gives
\begin{equation}
    \label{eq:P-relation}
    \sum_{i=1}^k P_{2i} \cdot P_{2k-2i} = \sum_{i=1}^k P_{2i-1} \cdot P_{2k-2i+1},
\end{equation}
which is~\ref{eq:R-relation}.
For $\lambda = (\lambda_1,\dots,\lambda_k)$ a partition, define $V_\lambda = \prod_{i=1}^k P_{\lambda_i}$.
From~\eqref{eq:P-relation} we see the $V_\lambda$'s are not linearly independent.
However, $\{V_\lambda:\lambda\ \vdash n\ \text{odd}\}$ is a basis for $\SymP$.
There are other bases for $\SymP$, most notably the Schur-$P$ and Schur-$Q$ functions indexed by strict partitions, which figure less prominently in our work.

The \emph{parking function symmetric function} is
\begin{equation}
    \label{eq:parking-symm}
    PF_n = \sum_{p \in \overline{\Pf}(n)} h_{\lambda(p)} = \sum_{\lambda \vdash n} \frac{1}{(n-\ell(\lambda)+1)}\binom{n}{m_1(\lambda),\dots,m_n(\lambda),n-\ell(\lambda)} h_\lambda .
\end{equation}
The coefficient of $h_\lambda$ in the second sum is the \emph{Kreweras number} $\Krew(\lambda)$.
We are interested in understanding the \emph{shifted parking function symmetric function} $SH_n = \sh(PF_n)$.
By applying $\sh$ to the right-hand-side of~\eqref{eq:parking-symm}, we have
\begin{equation}
    \label{eq:naive-V}
    SH_n = \sum_{p \in \overline{\Pf}(n)} 2^{\ell(\lambda(p))} V_{\lambda(p)}  = \sum_{\lambda \vdash n} \frac{2^{\ell(\lambda)}}{(n-\ell(\lambda)+1)}\binom{n}{m_1(\lambda),\dots,m_n(\lambda),n-\ell(\lambda)} V_\lambda.
\end{equation}
Since there are $2^{\ell(\lambda(p))}$ choices for $\overline{\sigma}$ associated to each sorted $p$, we also have
\[
SH_n = \sum_{(p,\overline{\sigma}) \in \overline{\NSh}(n)} V_{\lambda(p)}.
\]

Since we are summing over $V_\lambda$ of every shape, the expansion in~\eqref{eq:naive-V} is not unique.
In~\cite{stanley2024shifted}, Stanley used generating function arguments to give several expansions of $SH_n$ into bases of $\SymP$.
Most notably for our purposes, he showed:
\begin{theorem}[{\cite[Thm. 3.4]{stanley2024shifted}}]
    \label{t:odd-v}
    For every $n$, $SH_n = \sum_{\lambda \vdash n \ \text{odd}} \OKrew(\lambda) V_\lambda$ with
    \[
    \OKrew(\lambda) = \frac{2^\ell(\lambda)}{\ell(\lambda)!}\binom{\ell(\lambda)}{m_1(\lambda),m_3(\lambda),\dots} (n+\ell(\lambda)-1)(n+\ell(\lambda)-3)...(n-\ell(\lambda)+3) V_\lambda.
    \]
\end{theorem}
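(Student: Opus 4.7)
My plan is to prove Theorem~\ref{t:odd-v} by generating-function manipulations in $\SymP$. The starting observation is that the Kreweras number factors as
\[
\Krew(\lambda) = \frac{1}{n+1}\binom{n+1}{\ell(\lambda)}\binom{\ell(\lambda)}{m_1(\lambda),\ldots,m_n(\lambda)},
\]
so that collecting the sum~\eqref{eq:parking-symm} by length gives the standard Lagrange-inverted form
\[
PF_n = \frac{1}{n+1}[t^n]\bigl(1+H(t)\bigr)^{n+1}, \qquad H(t)=\sum_{k\geq 1} h_k t^k.
\]
Since $\sh$ is a ring homomorphism with $\sh(h_k)=2P_k$, applying $\sh$ yields $\sh(H(t))=2(A(t)+B(t))$, and the relation $A^2=B^2+B$ is equivalent to $1+2B=\sqrt{1+4A^2}$, so $1+2(A+B)=2A+\sqrt{1+4A^2}$. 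Therefore
\[
SH_n = \frac{1}{n+1}[t^n]\bigl(2A(t)+\sqrt{1+4A(t)^2}\bigr)^{n+1}.
\]

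The key analytic step is to expand $f(u):=(u+\sqrt{1+u^2})^{n+1}$ as a power series in $u$. A direct computation gives $f'(u)=(n+1)f(u)/\sqrt{1+u^2}$, and differentiating this relation once and eliminating the square root yields the second-order ODE
\[
(1+u^2)f''(u)+u f'(u)-(n+1)^2 f(u)=0.
\]
Writing $f(u)=\sum_{\ell\geq 0} c_\ell u^\ell$, this becomes the two-step recurrence $(\ell+2)(\ell+1)c_{\ell+2}=((n+1)^2-\ell^2)c_\ell$ with $c_0=1$ and $c_1=n+1$. Iterating and using $(n+1)^2-j^2=(n+1-j)(n+1+j)$ gives the closed form
\[
c_\ell = \frac{n+1}{\ell!}(n+\ell-1)(n+\ell-3)\cdots(n-\ell+3), \qquad \ell\geq 1.
\]

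To conclude, substitute $u=2A(t)$ and expand $A(t)^\ell=\sum_{\mu\text{ odd},\,\ell(\mu)=\ell}\binom{\ell}{m_1(\mu),m_3(\mu),\ldots}V_\mu t^{|\mu|}$ by the multinomial theorem. Collecting the coefficient of $V_\mu t^n$ in our expression for $SH_n$ gives
\[
[V_\mu] SH_n = \frac{2^{\ell(\mu)} c_{\ell(\mu)}}{n+1}\binom{\ell(\mu)}{m_1(\mu),m_3(\mu),\ldots} = \OKrew(\mu),
\]
matching the stated formula. The main obstacle is the middle step: recognizing the ODE satisfied by $f$ and solving its recurrence to produce the asymmetric double-factorial product appearing in $\OKrew$. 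Every other step is a routine translation between symmetric-function identities and formal-power-series manipulations.
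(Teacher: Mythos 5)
Your proof is correct, and I checked the computational steps: the factorization $\Krew(\lambda)=\frac{1}{n+1}\binom{n+1}{\ell(\lambda)}\binom{\ell(\lambda)}{m_1(\lambda),\dots,m_n(\lambda)}$ agrees with the coefficient in~\eqref{eq:parking-symm}; the Lagrange-type form $PF_n=\frac{1}{n+1}[t^n](1+H(t))^{n+1}$ is the standard one; the ODE $(1+u^2)f''+uf'-(n+1)^2f=0$ does hold for $f=(u+\sqrt{1+u^2})^{n+1}$, and the resulting two-step recurrence with $c_0=1$, $c_1=n+1$ produces exactly the product $(n+\ell-1)(n+\ell-3)\cdots(n-\ell+3)$ (I verified $\ell=1,2,3,4$ and the induction step via $(n+1)^2-\ell^2=(n+1-\ell)(n+1+\ell)$); and since $f(2A(t))$ is manifestly a series in $A$ alone, the expansion lands in the genuine basis $\{V_\mu:\mu\ \text{odd}\}$, so reading off coefficients is legitimate. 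Two small points worth making explicit: passing from $A^2=B^2+B$ to $1+2B=\sqrt{1+4A^2}$ requires choosing the branch with constant term $+1$, which is forced because $B$ has no constant term; and the substitution $u=2A(t)$ is a valid formal-power-series composition because $A$ has no constant term. As for comparison with the paper: this statement is quoted from Stanley with a citation and is not proved here at all, so there is no internal proof to match. Stanley's original argument is, per our description, also a generating-function computation, so your route is essentially a reconstruction of his; by contrast, the new content of this paper is the combinatorial and representation-theoretic realization of the same $V$-expansion via odd shifted parking functions (Theorem~\ref{t:odd}), which does not by itself recover the closed form for $\OKrew(\lambda)$ --- extracting that count directly from $\overline{\OSh}(n)$ is precisely what is left open in Problem~\ref{prob:odd-krew}. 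So your argument is a fine analytic proof of the cited theorem, but it is orthogonal to, rather than a variant of, the paper's combinatorial approach.
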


For $\lambda$ odd, we call $\OKrew(\lambda)$ the \emph{odd Kreweras number} of $\lambda$.

\subsection{Frobenius Characters and Spin Characters}

Let $Z_n$ denote the algebra of class functions of $\fkS_n$. The {\em Frobenius characteristic map} is the linear isomorphism $\ch:Z_n \to \Sym$ defined by
\begin{equation}
    \label{eq:Frob}
    \ch(\chi) = \sum_{\lambda \vdash n} z_\lambda^{-1} \chi(\pi_\lambda)p_\lambda.
\end{equation}
Here $\pi_\lambda$ is a permutation with cycle type $\lambda$ and $z_\lambda$ is the size of the stabilizer of $\pi_\lambda$ under conjugation.
The \emph{Frobenius character} of a $\mathbb{C}[\fkS_n]$-module $V$ is the image of the character of $V$ under $\ch$.

Under the Frobenius characteristic map, addition in $\Sym$ corresponds to direct sum of $\CC[\fkS_n]$-modules and multiplication in $\Sym$ corresponds to the {\em induction product} of $\CC[\fkS_n]$-modules. Given a composition $\alpha = (\alpha_1, \dots, \alpha_k)$ of $n$, the Young subgroup corresponding to $\alpha$ is
\begin{equation}
    \label{eq:youngsubgroup}
    S_\alpha = S_{\alpha_1}\times \cdots \times S_{\alpha_k} ,
\end{equation}
viewed as the subgroup of $\fkS_n$ in which $S_{\alpha_1}$ permutes $\{1,\dots, \alpha_1\}$, $S_{\alpha_2}$ permutes $\{\alpha_1+1, \dots,\alpha_1+\alpha_2\}$, etc. If $W_i$ is a $\fkS_{\alpha_i}$-representation, then
\begin{equation}
    \label{eq:induct-prod}
    \ch(\mathbb{C}[\fkS_n] \otimes_{\mathbb{C}[\fkS_\alpha]}W_{\alpha_1} \otimes \cdots \otimes W_{\alpha_k}) = \ch(W_{\alpha_1}) \cdot \ldots \cdot \ch(W_{\alpha_k})
\end{equation}

Pointwise multiplication in $Z_n$ induces a second product on $\Sym$, called the \emph{Kronecker product} or \emph{inner product}.
By analogy, the \emph{Kronecker product} of two symmetric functions $f= \ch(\chi_V)$ and $g = \ch(\chi_W)$ is 
\[
f *g = \ch(\chi_V\chi_W)
\]
Equivalently, if $V$ and $W$ are $\mathbb{C}[\fkS_n]$-modules with Frobenius characters $f$ and $g$, respectively, then $f*g$ is the Frobenius character of $V\otimes W$.
This product extends linearly to all of $\Sym$.

Much as $\Sym$ encodes the characters of $\fkS_n$ representations, the subalgebra $\SymP$ encodes the character theory of spin representations of the double cover of $\fkS_n$.
We refer readers to $\cite{stembridge}$ for details beyond what we include here (see also~\cite{MR1007885}). 
The \emph{negative-type double cover} of $\fkS_n$ is
\[
\fkS_n^- = \langle s_1,\dots,s_{n-1},z: z^2 =1,\ s_i^2 = z,\ s_is_j = z s_j s_i \textrm{ for } |i-j|\geq 2,\ s_is_{i+1}s_i = zs_{i+1}s_is_{i+1}\rangle.
\]
For each odd partition of $n$ or distinct partition $\lambda \vdash n$ so that $n-\ell(\lambda)$ is odd, there are two conjugacy classes in $\fkS^-_n$, one of which is distinguished as \emph{positive}.
There is one conjugacy class in $\fkS^-_n$ for other partitions of $n$.
See~\cite[\S2]{stembridge} for details.

A \emph{spin representation} is a representation of $\fkS_n^-$ for which $z$ acts by $-1$.
Stembridge defines the {\em spin characteristic map} $\ch'$ from the algebra of spin class functions of $\fkS_n^-$, denoted $Z_n'$, to $\SymP$ by 
\[
\ch'(\psi) = \sum_{\substack{\lambda\vdash n\\\lambda \textrm{ odd}}} z_\lambda^{-1} 2^{\ell(\lambda)/2}\psi(\pi_\lambda)p_\lambda
\]
where $\pi_\lambda \in \fkS_n^-$ is in the positive conjugacy class associated to $\lambda$.

Since every irreducible representation of $\fkS_n^-$ is either a $\fkS_n$--representation or a spin representation, the algebra of $\fkS_n^-$ class functions is  $Z_n \oplus Z_{n}'$. Multiplication on $Z_n \oplus Z_n'$ thus induces a multiplication on $\Sym \oplus \SymP$ via the map $\ch \oplus \ch'$, which is also called the Kronecker product. 
Using Stembridge's notation, for $f \in \SymP$ let $f$ denote $(f,0) \in \Sym \oplus \SymP$ and $f'$ denote $(0,f) \in \Sym \oplus \SymP$.

\begin{proposition}[{\cite[Proof of Prop. 9.1]{stembridge}}]
\label{p:shift-is-kronecker}
    Let $f \in \Sym$ be homogeneous of degree $n$. Then
    \[f*2P_n = \sh f \quad  \text{and} \quad f* 2P_n' = \sh f'.
    \]
\end{proposition}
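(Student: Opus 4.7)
The plan is to reduce to a direct computation in the power sum basis. Since both identities are linear in $f$ and the power sums $\{p_\mu : \mu \vdash n\}$ form a basis of the degree $n$ part of $\Sym$, it suffices to verify them for $f = p_\mu$.

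First I would expand $2P_n = \sh(h_n)$ in the power sum basis: since $h_n = \sum_{\lambda \vdash n} z_\lambda^{-1} p_\lambda$ and $\sh(p_\lambda) = 2^{\ell(\lambda)} p_\lambda$ when $\lambda$ is odd and $0$ otherwise,
\[
2P_n = \sum_{\substack{\lambda \vdash n \\ \lambda \text{ odd}}} \frac{2^{\ell(\lambda)}}{z_\lambda} p_\lambda.
\]
Next I would invoke the Kronecker formula $p_\mu * p_\lambda = \delta_{\mu,\lambda} z_\mu p_\mu$, which follows because $z_\lambda^{-1} p_\lambda = \ch(\chi_\lambda)$ for $\chi_\lambda$ the indicator class function of $[\pi_\lambda]$, whose pointwise products are again indicators (rescaled). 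Combining,
\[
p_\mu * 2P_n = \sum_{\lambda \text{ odd}} \frac{2^{\ell(\lambda)}}{z_\lambda} (p_\mu * p_\lambda) = \begin{cases} 2^{\ell(\mu)} p_\mu & \mu \text{ odd},\\ 0 & \text{otherwise},\end{cases}
\]
which agrees with $\sh(p_\mu)$ on the nose.

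For the primed identity I would establish the companion formula $p_\mu * p_\lambda' = \delta_{\mu,\lambda} z_\mu p_\mu'$ valid for odd $\lambda$ and any $\mu \vdash n$. By the definition of $\ch'$, the element $z_\lambda^{-1} 2^{\ell(\lambda)/2} p_\lambda'$ is the spin class function supported at the positive $\fkS_n^-$--class $[\pi_\lambda^+]$, while $z_\mu^{-1} p_\mu$ pulled back to $\fkS_n^-$ via the quotient $\fkS_n^- \twoheadrightarrow \fkS_n$ takes value $1$ on both classes above $\pi_\mu$. Pointwise multiplication produces an indicator supported at $[\pi_\lambda^+]$ precisely when $\mu = \lambda$, and clearing the factors $2^{\ell(\lambda)/2}$ yields the stated formula. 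Applying it to $2P_n' = \sum_{\lambda \text{ odd}} 2^{\ell(\lambda)} z_\lambda^{-1} p_\lambda'$ gives $p_\mu * 2P_n' = 2^{\ell(\mu)} p_\mu' = (\sh p_\mu)'$ when $\mu$ is odd and $0$ otherwise.

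The main obstacle I anticipate is setting up the mixed Kronecker formula between $Z_n$ and $Z_n'$ correctly: one must track the $2^{\ell(\lambda)/2}$ normalization built into $\ch'$ and confirm that it appears twice (once in the spin class function and once in the resulting indicator) so that the two factors combine to produce the coefficient $2^{\ell(\lambda)}$ of $p_\lambda$ in $\sh(p_\lambda)$. Once that bookkeeping is in place, both halves of the proposition reduce to the single power sum calculation above.
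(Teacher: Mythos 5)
Your proof is correct. The paper gives no argument of its own here---it cites Stembridge's proof of Proposition 9.1---and your computation (expand $2P_n=\sh(h_n)$ in power sums, use $p_\mu * p_\lambda = \delta_{\mu,\lambda}z_\mu p_\mu$ and its mixed analogue $p_\mu * p_\lambda' = \delta_{\mu,\lambda}z_\mu p_\mu'$ obtained from indicator class functions on $\fkS_n$ and $\fkS_n^-$) is exactly that argument. One small caveat: in your closing paragraph the $2^{\ell(\lambda)/2}$ normalization in $\ch'$ does not ``combine'' to give $2^{\ell(\lambda)}$; it simply cancels from both sides of $p_\mu * p_\lambda' = \delta_{\mu,\lambda}z_\mu p_\mu'$, and the factor $2^{\ell(\lambda)}$ comes entirely from $\sh(h_n)$, as your displayed computation already shows.
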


Proposition~\ref{p:shift-is-kronecker} gives two alternate definitions of shiftification.
We realize these definitions via general constructions for both $\fkS_n$--modules and $\fkS^-_n$--modules.
The former construction is described in Proposition~\ref{p:shiftification}, while the latter is implicit in~\cite{stembridge} and related work.
As a consequence of these constructions, we can modify $\CC[\Pf_n]$ to build modules whose characters' images under $\ch$ and $\ch'$, respectively, are $SH_n$.

We begin with an observation that an exterior algebra has character $2P_n$.
\begin{observation}
    \label{o:exterior}
    Let $\fkS_n$ act on $\mathbb{C}^n$ by permuting basis elements $\{\beta_1, \dots, \beta_n\}$. Then the induced action on $\bigwedge (\mathbb{C}^n)$ has Frobenius character $2P_n$. Furthermore, the even subalgebra $\bigwedge^{\textrm{even}}(\CC^n) := \bigoplus_k \bigwedge^{2k}(\mathbb{C}^n)$ and odd part $\bigwedge^{\textrm{odd}}(\CC^n) := \bigoplus_k \bigwedge^{2k+1}(\mathbb{C}^n)$ are $\fkS_n$-submodules with character $P_n$.
\end{observation}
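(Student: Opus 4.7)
The plan is to compute the character of $\bigwedge(\CC^n)$ at a permutation $w$ directly and then match the resulting Frobenius image with $2P_n$. I would start from the standard identity $\mathrm{tr}\bigl(w,\bigwedge V\bigr)=\det(I+w|_V)$. For $w\in\fkS_n$ of cycle type $\lambda=(\lambda_1,\dots,\lambda_k)$, the permutation matrix $P_w$ is block-diagonal with cyclic blocks of sizes $\lambda_1,\dots,\lambda_k$, and since the eigenvalues of the $m\times m$ cyclic block $C_m$ are the $m$-th roots of unity, one computes $\det(I+C_m)=\prod_{j=0}^{m-1}(1+e^{2\pi ij/m})=1-(-1)^m$, which equals $2$ when $m$ is odd and $0$ when $m$ is even. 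Hence the character of $\bigwedge(\CC^n)$ at $w$ equals $2^{\ell(\lambda)}$ when $\lambda$ is odd and vanishes otherwise.

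Substituting into~\eqref{eq:Frob} gives
\[
\ch\bigl(\bigwedge(\CC^n)\bigr)=\sum_{\substack{\lambda\vdash n\\ \lambda\ \mathrm{odd}}} z_\lambda^{-1}\, 2^{\ell(\lambda)}\, p_\lambda,
\]
and to recognize this as $2P_n$ I would invoke the exponential formula. Since $\sh$ is an algebra homomorphism on $\Sym$ (being diagonal on the power-sum basis) with $\sh(h_n)=2P_n$, applying $\sh$ coefficientwise to the generating-function identity $\sum_{n\ge 0} h_n t^n=\prod_{k\ge 1}\exp(p_k t^k/k)$ yields $\sum_{n\ge 0} 2P_n t^n=\prod_{k\ \mathrm{odd}}\exp(2 p_k t^k/k)$; extracting the coefficient of $t^n$ via the exponential formula reproduces the displayed sum term by term.

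For the even/odd decomposition, each $\bigwedge^k(\CC^n)$ is an $\fkS_n$-submodule since the action preserves degree, so the even and odd parts are submodules. To see their characters agree, I would compute the signed character $\sum_k(-1)^k\,\mathrm{tr}\bigl(w,\bigwedge^k(\CC^n)\bigr)=\det(I-P_w)$, which vanishes for $n\ge 1$ because every permutation matrix fixes the all-ones vector and hence has $1$ as an eigenvalue. Thus $\ch\bigl(\bigwedge^{\mathrm{even}}(\CC^n)\bigr)=\ch\bigl(\bigwedge^{\mathrm{odd}}(\CC^n)\bigr)$, and summing to $2P_n$ gives each equal to $P_n$. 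I do not expect a genuine obstacle; the only step requiring care is matching power-sum expansions in the second paragraph, which the generating-function computation handles uniformly.
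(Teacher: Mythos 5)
Your proof is correct, but it takes a genuinely different route from the paper. The paper works degree by degree: it identifies $\bigwedge^k(\CC^n)$ as the induced module $\CC[\fkS_n]\otimes_{\CC[\fkS_k\times\fkS_{n-k}]}\CC[\beta_1\wedge\cdots\wedge\beta_k]$, so that $\ch\bigl(\bigwedge^k(\CC^n)\bigr)=e_kh_{n-k}$, and then all three claims drop out of the three expressions for $P_n$ in~\eqref{eq:P-def} by summing over all $k$, over even $k$, or over odd $k$. You instead compute the character pointwise via $\det(I+w)$, getting $2^{\ell(\lambda)}$ on odd cycle types and $0$ otherwise, and then match power-sum expansions; your $\det(I-P_w)=0$ trick for splitting the even and odd parts is a nice touch that the paper's argument does not need. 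The paper's route is shorter and more structural --- it hands you the graded characters $e_kh_{n-k}$ individually, which is what feeds into Lemma~\ref{l:naive-decomp} later --- while yours makes the support of the character (odd cycle types only) completely explicit, which is the reason $\ch\bigl(\bigwedge(\CC^n)\bigr)$ lands in $\SymP$ at all. Two small remarks: your generating-function detour through the exponential formula is unnecessary, since applying $\sh$ directly to $h_n=\sum_{\lambda}p_\lambda/z_\lambda$ gives $2P_n=\sum_{\lambda\ \mathrm{odd}}z_\lambda^{-1}2^{\ell(\lambda)}p_\lambda$ in one line (this is exactly the identity the paper invokes at the end of the proof of Proposition~\ref{p:char-of-clifford}); and the parenthetical ``being diagonal on the power-sum basis'' does not by itself justify that $\sh$ is an algebra homomorphism --- what matters is that its eigenvalue on $p_\lambda$ is the product of its eigenvalues on the $p_{\lambda_i}$, which does hold here.
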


\begin{proof}
    The degree $k$ component is
    \[
    \bigwedge\nolimits^k (\mathbb{C}^n) \cong_{\mathbb{C}[\fkS_n]} \CC[\fkS_n] \otimes_{\CC[\fkS_k \times \fkS_{n-k}]} \mathbb{C}[\beta_1 \wedge \cdots \wedge \beta_k]
    \]
    and thus has character $e_kh_{n-k}$ so the result follows by~\eqref{eq:P-def}.\end{proof}

We are now prepared to prove Proposition~\ref{p:shiftification}.
\begin{proof}[Proof of Proposition~\ref{p:shiftification}]
    Apply Observation~\ref{o:exterior} and the first equality in Proposition~\ref{p:shift-is-kronecker}.
\end{proof}

Identify $\mathbb{C}[\NSh]$ with the $\fkS_n$-module $\mathbb{C}[\Pf] \otimes \bigwedge (\mathbb{C}^n)$ via
    \[
    (p, \sigma) \mapsto p \otimes (v_{i_1} \wedge \cdots \wedge v_{i_k})
    \]
    where $\{i_1 \leq \cdots \leq i_k\}$ is the set of indices $i$ for which $\sigma_i = -1$.
Applying Proposition~\ref{p:shiftification} to $\CC[\NSh_n]$, we have the following corollary.

\begin{corollary}
\label{c:naive-frob}
     The Frobenius character of $\mathbb{C}[\NSh] \cong \mathbb{C}[\Pf] \otimes \bigwedge (\mathbb{C}^n)$ is $SH_n$.
\end{corollary}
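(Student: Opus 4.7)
The plan is to realize this as an immediate consequence of Proposition~\ref{p:shiftification}, with essentially all of the content absorbed into the explicit identification $\CC[\NSh(n)] \cong \CC[\Pf(n)] \otimes \bigwedge(\CC^n)$ stated just above the corollary.

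First I would observe that $PF_n$ is, by definition, the Frobenius character of the $\fkS_n$-module $\CC[\Pf(n)]$ under the action that permutes the entries of each parking function. Since $SH_n = \sh(PF_n)$, Proposition~\ref{p:shiftification} applied with $M = \CC[\Pf(n)]$ and $f = PF_n$ gives at once that $\CC[\Pf(n)] \otimes \bigwedge(\CC^n)$ has Frobenius character $\sh(PF_n) = SH_n$. Thus the only thing left to justify is that the right-hand side of the isomorphism really is $\CC[\NSh(n)]$ as an $\fkS_n$-module.

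Second I would verify that the map $(p,\sigma) \mapsto p \otimes (v_{i_1}\wedge\cdots\wedge v_{i_k})$, with $i_1 < \cdots < i_k$ the indices at which $\sigma_i = -1$, is a bijection between $\NSh(n)$ and a natural basis of $\CC[\Pf(n)] \otimes \bigwedge(\CC^n)$: the parking function factor ranges over $\Pf(n)$ and the wedge factor ranges over the $2^n$ subsets of $[n]$ presented in increasing order. This gives the vector space isomorphism for free. The $\fkS_n$-module structure on $\CC[\NSh(n)]$ is then transported from the diagonal tensor action: a permutation $w$ acts on $p$ in the usual way, and simultaneously relabels the indices of $\sigma$ while introducing the sign needed to restore the wedge factors to increasing order. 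This is exactly the action on $\NSh(n)$ alluded to in the acknowledgements.

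The only point requiring any care is the sign bookkeeping in the equivariance check, and this is really baked into the definition of the bijection rather than being a substantive computation: once one fixes the convention of writing wedges in increasing index order, the signs are forced. Combined with the first paragraph, this completes the corollary.
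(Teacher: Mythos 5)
Your proposal is correct and matches the paper's argument: the paper likewise defines the module structure on $\CC[\NSh(n)]$ by transporting it through the identification $(p,\sigma)\mapsto p\otimes(v_{i_1}\wedge\cdots\wedge v_{i_k})$ and then applies Proposition~\ref{p:shiftification} to $M=\CC[\Pf(n)]$ with $f=PF_n$. Your extra remarks on the basis bijection and the sign bookkeeping are exactly the (implicit) content of the paper's one-line justification.
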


The resulting $\fkS_n$--action on $\NSh$ preserves the sort of a naive shifted parking function and thus $\CC[\NSh]$ decomposes into submodules.

\begin{lemma}
\label{l:naive-decomp}
    Let $(p, \overline{\sigma}) \in \overline{\NSh}$ and $N_{(p, \overline{\sigma})}$ be the set of naive shifted parking functions whose sort is $(p, \overline{\sigma})$.
    Then $\CC[N_{(p,\overline{\sigma})}]$  is an $\fkS_n$--submodule of $\CC[\NSh]$ with character $V_{\lambda(p)}$.
\end{lemma}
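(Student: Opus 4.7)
The plan is to realize $\CC[N_{(p,\overline{\sigma})}]$ as an induced representation from the Young subgroup of $\fkS_n$ stabilizing $p$, and then invoke the induction--product formula~\eqref{eq:induct-prod} together with Observation~\ref{o:exterior}. Since the $\fkS_n$-action on $\NSh$ preserves $\sort$ (noted just before the lemma), $\CC[N_{(p,\overline{\sigma})}]$ is automatically an $\fkS_n$-submodule, so only the character computation remains.

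First I would identify the ``fiber''
\[
M_{(p,\overline{\sigma})} = \{(p,\sigma) \in N_{(p,\overline{\sigma})}\}
\]
of elements whose first coordinate is the sorted parking function $p$, i.e. sign vectors $\sigma \in \{\pm 1\}^n$ satisfying $\prod_{p_i = k} \sigma_i = \overline{\sigma}_k$ for every $k$ with $\alpha_k(p) > 0$. The stabilizer of $p$ in $\fkS_n$ is the Young subgroup $\fkS_\alpha = \prod_k \fkS_{\alpha_k(p)}$ permuting positions within each block $\{i : p_i = k\}$, and this action preserves the blockwise parity constraint, so $\CC[M_{(p,\overline{\sigma})}]$ is a $\fkS_\alpha$-submodule. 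The key structural claim is that the $\fkS_n$-equivariant map
\[
\CC[\fkS_n] \otimes_{\CC[\fkS_\alpha]} \CC[M_{(p,\overline{\sigma})}] \longrightarrow \CC[N_{(p,\overline{\sigma})}], \qquad w \otimes (p,\sigma) \longmapsto (w \cdot p,\, w \cdot \sigma),
\]
is an isomorphism. Surjectivity is immediate: any $(q,\eta) \in N_{(p,\overline{\sigma})}$ has $\sort(q) = p$, so for any $w \in \fkS_n$ with $w \cdot p = q$ one checks that $w^{-1} \cdot \eta$ lies in $M_{(p,\overline{\sigma})}$ using the defining parity condition on $(q,\eta)$. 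Injectivity follows from the dimension count $[\fkS_n : \fkS_\alpha] \cdot \prod_k 2^{\alpha_k(p)-1} = |N_{(p,\overline{\sigma})}|$.

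Next I would decompose the fiber across blocks. Because the parity constraints on different blocks are independent, $\CC[M_{(p,\overline{\sigma})}]$ splits as a tensor product over $k$ with $\alpha_k(p) > 0$, each factor being the subspace of $\bigwedge(\CC^{\alpha_k(p)})$ of fixed parity under the identification $\sigma \leftrightarrow \beta_{i_1} \wedge \cdots \wedge \beta_{i_j}$ introduced just before Corollary~\ref{c:naive-frob}. By Observation~\ref{o:exterior}, each such factor has Frobenius character $P_{\alpha_k(p)}$ as a $\fkS_{\alpha_k(p)}$-module. Applying~\eqref{eq:induct-prod} then gives
\[
\ch\bigl(\CC[N_{(p,\overline{\sigma})}]\bigr) = \prod_{k : \alpha_k(p) > 0} P_{\alpha_k(p)} = V_{\lambda(p)},
\]
since $\lambda(p)$ is precisely the multiset of positive $\alpha_k(p)$'s and $\SymP$ is commutative. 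The main obstacle is verifying the induced-module isomorphism in the second paragraph; everything else is a blockwise unpacking plus a direct appeal to Observation~\ref{o:exterior}.
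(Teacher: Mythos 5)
Your proof is correct and follows essentially the same route as the paper: your fiber $M_{(p,\overline{\sigma})}$ is exactly the paper's $N'_{(p,\overline{\sigma})}$, which is likewise identified as a tensor product of parity components of exterior algebras over the Young subgroup $\fkS_\alpha$ and then induced up via~\eqref{eq:induct-prod} and Observation~\ref{o:exterior}. The only difference is that you verify the induced-module isomorphism explicitly (surjectivity plus a dimension count), where the paper simply asserts that the $\fkS_n$-closure of the fiber is all of $\CC[N_{(p,\overline{\sigma})}]$.
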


\begin{proof}
Let $\alpha$ be the content of $p$, and let $N'_{(p, \overline{\sigma})} = \{(p', \sigma') \in N_{(p, \overline{\sigma})} \mid p' = p \}$.
Then $\CC[N'_{(p, \overline{\sigma})}]$ is closed under the action of $\fkS_\alpha$, and the $\fkS_n$--closure of $\CC[N'_{(p, \overline{\sigma})}]$ is $\CC[N_{(p, \overline{\sigma})}]$.
As an $\fkS_\alpha$-module, 
    \[
    \CC[N'_{(p, \overline{\sigma})}] \cong \bigwedge\nolimits^{\overline{\sigma}_1}(\CC^{\alpha_1}) \otimes \cdots\otimes \bigwedge\nolimits^{\overline{\sigma}_n} (\CC^{\alpha_n})
    \]
where $\bigwedge^{\overline{\sigma}_i}(\mathbb{C}^{\alpha_i})$ is intended to denote $\bigwedge^{\textrm{even}} (\mathbb{C}^{\alpha_i})$ if $\overline{\sigma}_i = 1$ and $\bigwedge^{\textrm{odd}} (\mathbb{C}^{\alpha_i})$ if $\overline{\sigma}_i = -1$.
Then by~\eqref{eq:induct-prod} and Observation~\ref{o:exterior}, we see
    \[
    \ch(\CC[N_{(p, \overline{\sigma})}]) = \ch( \CC[\fkS_n] \otimes_{\CC[\fkS_\alpha]} \CC[N'_{(p, \overline{\sigma})}]) = \ch\left( \bigwedge\nolimits^{\overline{\sigma}_1}(\CC^{\alpha_1})\right) \cdots \ch\left(\bigwedge\nolimits^{\overline{\sigma}_n}(\CC^{\alpha_n}) \right)  = P_{\alpha_1}\cdots P_{\alpha_n} = V_\lambda.
    \]
\end{proof}

To build an $\fkS_n^-$--module whose spin character is a constant multiple of $SH_n$, we use Clifford algebras in place of exterior algebras.
We follow the convention that the \emph{Clifford algebra} $\cl_n$ is the algebra over $\CC$ with $n$ generators $\xi_1, \dots, \xi_n$ satisfying the relations $\xi_i^2 =1$ and $\xi_i\xi_j = -\xi_j\xi_i$ for $i\neq j$.
The double cover $\fkS_n^-$ embeds into the multiplicative group of $\cl_n$ with 
\[
s_i \mapsto \frac{i}{\sqrt{2}} (\xi_i-\xi_{i+1}).
\]
Since $z = s_i^2$, the reader can check $z \mapsto -1$.
The Clifford algebra acts on itself by left multiplication, so this embedding turns $\cl_n$ into an $\fkS_n^-$--module.
The following is implicit in~\cite[\S4]{stembridge}:
\begin{proposition}
\label{p:char-of-clifford}
    The spin character of $\cl_n$ is $2^{\frac{n}{2}+1}P_n$. 
\end{proposition}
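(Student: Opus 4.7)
My plan is to compute the spin character of $\cl_n$ directly on a cycle-type representative for each odd partition $\lambda \vdash n$ and then compare with the power-sum expansion $2P_n = \sum_{\lambda\ \text{odd}} 2^{\ell(\lambda)} z_\lambda^{-1} p_\lambda$. The key reduction will be an observation about left multiplication on the basis $\{\xi_I : I \subseteq [n]\}$ of $\cl_n$: since $\xi_I \xi_J = \pm \xi_{I \triangle J}$, the diagonal entry at $\xi_J$ of left multiplication by $\xi_I$ vanishes unless $I = \emptyset$. Hence for any $x = \sum_I c_I \xi_I \in \cl_n$, the trace of left multiplication by $x$ is $2^n c_\emptyset$, reducing the character computation to extracting a single coefficient.

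To extract this coefficient, I would choose $\pi_\lambda$ as the product of the canonical disjoint cycles $s_{a_j}s_{a_j+1}\cdots s_{a_j+\lambda_j-2}$ acting on consecutive blocks, where $a_j = \lambda_1 + \cdots + \lambda_{j-1}+1$. Under the embedding, the $j$-th cycle lifts to $(i/\sqrt{2})^{\lambda_j-1}\prod_{r=0}^{\lambda_j-2}(\xi_{a_j+r} - \xi_{a_j+r+1})$ in $\cl_n$. Since distinct cycles involve disjoint generators and constant terms commute past nonconstant monomials on disjoint indices, the constant term $c_\emptyset$ of $\pi_\lambda$ factors as the product of constant terms from the individual cycles.

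The main technical step is to compute the constant term of $\prod_{r=0}^{2m-1}(\xi_{a+r} - \xi_{a+r+1})$ for a single odd cycle of length $2m+1$. I expect a short parity argument to show that the unique choice of $\pm \xi$'s (one from each factor) with each index appearing an even number of times must alternate, selecting $-\xi_{a+r+1}$ from the even-indexed factors and $\xi_{a+r}$ from the odd-indexed factors, and collapses in order to $\xi_{a+1}^2 \xi_{a+3}^2 \cdots \xi_{a+2m-1}^2 = 1$ with overall sign $(-1)^m$. Combined with the prefactor $(i/\sqrt{2})^{2m} = (-1)^m/2^m$, the contribution from this cycle to $c_\emptyset$ is $1/2^m = 2^{(1-\lambda_j)/2}$.

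Multiplying across cycles yields $c_\emptyset(\pi_\lambda) = 2^{(\ell(\lambda)-n)/2}$, so the trace of $\pi_\lambda$ on $\cl_n$ is $2^{(n+\ell(\lambda))/2}$. Substituting into the definition of $\ch'$ gives
\[
\ch'(\cl_n) = \sum_{\lambda\ \text{odd}} z_\lambda^{-1}\, 2^{\ell(\lambda)/2} \cdot 2^{(n+\ell(\lambda))/2}\, p_\lambda = 2^{n/2} \cdot 2P_n = 2^{n/2+1} P_n,
\]
as desired. The main obstacle is the parity/path-matching argument for the single-cycle constant term; an alternative route that avoids this combinatorial computation is to invoke the Clifford-algebra structure theorem, decomposing $\cl_n$ as $2^{\lfloor n/2\rfloor}$ copies of the basic spin representation $\Delta_n$ (two such representations when $n$ is odd), and then apply Stembridge's explicit character formula for $\Delta_n$ from \S4.
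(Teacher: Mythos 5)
Your proof is correct and follows essentially the same route as the paper's: reduce the trace of left multiplication to $2^n$ times the constant coefficient via the permutation action on the basis $\{\xi_I\}$, compute that coefficient cycle by cycle on a canonical class representative, and match the result against $\sh(h_n)=2P_n$. Your explicit parity argument for the single-cycle constant term (and your cycle indexing) actually supplies a detail the paper only asserts, so this is a faithful and slightly more complete version of the same proof.
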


\begin{proof}

For $ I = \{i_1 \leq i_2 \leq \cdots \leq i_m\} \subseteq [n]$, define $\xi_I =  \xi_{i_1}\cdots \xi_{i_m}$.
Then $\{\xi_I \mid I \subseteq [n]\}$ is a basis of $\cl_n$.
For $J \subseteq [n]$, multiplication by $\xi_J$ permutes this basis, with no fixed points unless $J = \varnothing$.
Therefore, the trace of $\xi_J$'s action is 0 unless $J = \varnothing$, in which case $\xi_J = 1$ acts as the identity with trace $2^n$.

    Let $\lambda = (\lambda_1, \dots, \lambda_k)$ be an odd partition, and let $r_j = \sum_{i=0}^{j-1} \lambda_i$. A representative for the positive conjugacy class for $\lambda$ is given by $\pi_\lambda = \pi_1 \cdots \pi_k$ where $\pi_j = s_{r_j}s_{r_j+1}\cdots s_{r_j+\lambda_j -1}$. The image of $\pi_j$ in $\cl_n$ is thus
    \[
    \left(\frac{i}{\sqrt{2}}\right)^{\lambda_j-1} (\xi_{r_j}-\xi_{r_{j+1}})(\xi_{r_j+1}- \xi_{r_j+2})\cdots (\xi_{r_j+\lambda_j-1} - \xi_{r_j + \lambda_j})
    \]
    By the preceding paragraph, the trace of $\pi_j$ is $2^n$ times the coefficient of the constant term, $2^{-\frac{\lambda_j-1}{2}}$. Thus, the trace of $\pi_\lambda$ is $2^{\frac{n + \ell(\lambda)}{2}}$, so
    \[
    \ch'(\cl_n) = \sum_{\substack{\lambda\vdash n\\\lambda \textrm{ odd}}} z_\lambda^{-1} 2^{\ell(\lambda)/2}2^{\frac{n + \ell(\lambda)}{2}}p_\lambda = 2^{\frac{n}{2}+1}P_n
    \]
    where the last equality follows by applying $\sh$ to the identity $h_n = \sum_{\lambda} p_\lambda/z_\lambda$.
\end{proof}
This construction is closely related to one using the spin subalgebra of $\cl_n$ that appears in~\cite{stembridge} that does not include any powers of 2.

If we instead identify $\mathbb{C}[\NSh]$ with the projective $\fkS_n$-module $\mathbb{C}[\Pf] \otimes \cl_n$ via
    \[
    (p, \sigma) \mapsto p \otimes (\zeta_{i_1}  \cdots \zeta_{i_k})
    \]
    where $\{i_1 \leq \cdots \leq i_k\}$ is the set of indices $i$ for which $\sigma_i = -1$, then Proposition~\ref{p:char-of-clifford} and Proposition~\ref{p:shift-is-kronecker} combine to give:
\begin{corollary}
     The spin character of $\mathbb{C}[\NSh] \cong \CC[\Pf] \otimes \cl_n$  is $2^{n/2}SH_n$.
\end{corollary}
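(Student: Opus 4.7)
The plan is to run exactly the strategy hinted at in the sentence just before the corollary, namely to combine Propositions~\ref{p:char-of-clifford} and~\ref{p:shift-is-kronecker}. First I would check that the displayed map really does promote $\CC[\NSh]$ into an $\fkS_n^-$-module isomorphic to $\CC[\Pf]\otimes \cl_n$: the right-hand factor $\CC[\Pf]$ carries the usual (non-spin) $\fkS_n$-action, viewed as an $\fkS_n^-$-action in which $z$ acts trivially, while $\cl_n$ carries the spin action from Proposition~\ref{p:char-of-clifford} in which $z$ acts as $-1$. Hence $z$ acts on the tensor product by $-1$, so $\CC[\Pf]\otimes \cl_n$ is indeed a spin representation, and transporting the structure across the bijection $(p,\sigma)\mapsto p\otimes \xi_{i_1}\cdots\xi_{i_k}$ makes this the $\fkS_n^-$-action we are computing characters for.

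Next I would compute the spin character using the Kronecker product on $\Sym\oplus\SymP$. For a non-spin $\fkS_n^-$-module $V$ and a spin $\fkS_n^-$-module $W$, pointwise multiplication of characters in $Z_n\oplus Z_n'$ translates under $\ch\oplus\ch'$ into
\[
\ch'(V\otimes W) \;=\; \ch(V) \,*\, \ch'(W).
\]
Applying this with $V=\CC[\Pf]$ and $W=\cl_n$, and using Proposition~\ref{p:char-of-clifford} to write $\ch'(\cl_n)=2^{n/2+1}P_n'$, we obtain
\[
\ch'(\CC[\NSh]) \;=\; PF_n * \bigl(2^{n/2+1}P_n\bigr)' \;=\; 2^{n/2}\cdot\bigl(PF_n * 2P_n'\bigr).
\]
Now Proposition~\ref{p:shift-is-kronecker} applied to $f=PF_n$ gives $PF_n*2P_n' = (\sh PF_n)' = SH_n'$, so the right-hand side equals $2^{n/2}SH_n'$, which is the claim.

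The only real subtlety is the "mixed" Kronecker product identity in the second paragraph: the multiplication on $\Sym\oplus\SymP$ used in Proposition~\ref{p:shift-is-kronecker} must be the one induced from pointwise multiplication of class functions on $\fkS_n^-$, so that a product of the form $f * g'$ corresponds to tensoring a non-spin module against a spin module. This is built into the setup described right after Proposition~\ref{p:shift-is-kronecker}, so no additional work is required beyond citing it. Everything else is a direct substitution, and I do not anticipate any obstacle.
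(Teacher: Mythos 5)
Your proposal is correct and follows exactly the route the paper intends: identify $\CC[\NSh]$ with the spin module $\CC[\Pf]\otimes\cl_n$, use Proposition~\ref{p:char-of-clifford} to get $\ch'(\cl_n)=2^{n/2+1}P_n$, and apply the Kronecker-product identity $f*2P_n'=\sh f'$ from Proposition~\ref{p:shift-is-kronecker} to extract the factor $2^{n/2}$ times $SH_n$. The paper leaves this combination implicit in the sentence preceding the corollary, and your write-up simply makes the same two-step argument explicit.
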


\section{Main results}
\label{s:main}

In this section, we show how to modify the combinatorial realization of $SH_n$ via naive shifted parking functions from Corollary~\ref{c:naive-frob} so that it is instead expressed in terms of $V_\lambda$'s with each $\lambda$ odd.
To do so, we introduce a lattice path associated to each naive shifted parking function.
Naive shifted parking functions with the same lattice path are grouped using a novel structure we call a garage.
We then introduce odd shifted parking functions and show they also can be grouped using garages, giving the expansion of $SH_n$ into the $V_\lambda$ basis with $\lambda$'s odd from Theorem~\ref{t:odd-v}.

\subsection{Garages and naive shifted parking functions}
\label{ss:garage}

The key to our argument is to relate naive and (forthcoming) odd shifted parking functions by grouping them via garages, which we now introduce.
Recall for $p$ a parking function that $\alpha_i(p)$ is the number of times $i$ occurs in $p$.

\begin{definition}
\label{d:garage}
For $(p, \overline{\sigma})$ a sorted naive shifted parking function, let $\upsilon \in \{0,1,2\}^n$ with
\begin{equation}
\label{eq:upsilon}
\upsilon_i = \begin{cases}
    0 & \alpha_i(p) = 0;\\
    1 & \alpha_i(p) \ \text{is odd};\\
    2 & \alpha_i(p) > 0\ \text{is even.}
\end{cases}
\end{equation}
Then $(p,\overline{\sigma})$ is a \emph{garage} if for $i<j$ with $\upsilon_i = 2$ and $\upsilon_j = 2$ so that the word $
    \upsilon_{i+1}\upsilon_{i+2} \cdots \upsilon_{j-1} $ has as many $2$'s as $0$'s in any prefix, we then have $\overline{\sigma}_i = \overline{\sigma}_j$.
Let $\Gar(n)$ denote the set of garages of size $n$. 
\end{definition}

The lattice words used to define garages can be interpreted as a lattice path.
More generally, every sorted naive shifted parking function has an associated lattice path.

\begin{definition}
    \label{d:lattice-path}
Let $(p,\overline{\sigma})$ be a sorted naive shifted parking function.
Again define $\upsilon$ as in~\eqref{eq:upsilon}.
Define the \emph{matching path} $L$ associated to $(p,\overline{\sigma})$ by, for each $i$, adding a step according to the following rules:
\begin{itemize}
    \item up-step: $L_i = (0,1)$ if $\upsilon_i = 2$ and $\overline{\sigma}_i = +$ or $\upsilon_i = 0$ and the path is currently below the diagonal;
    \item right-step: $L_i = (1,0)$ if $\upsilon_i = 2$ and $\overline{\sigma}_i = -$ or $\upsilon_i = 0$ and the path is currently above the diagonal;
    \item positive diagonal-step $L_i = (1,1)+$ if $\upsilon_i = 1$;
    \item negative diagonal step: $L_i = (1,1)-$ if $\upsilon_i = 0$ and the path is currently on the diagonal.
\end{itemize}
Let $\mathcal{L}(n)$ be the set of lattice paths associated to naive shifted parking functions of size $n$.
See Figure~\ref{fig:lattice-path} and Example~\ref{ex:lattice-path} for some examples of this construction.
\end{definition}

For garages, the lattice word parity constraint can be summarized as:

\begin{lemma}
    \label{l:garage-path}
A sorted naive shifted parking function $(p,\overline{\sigma})$ is a garage if and only if when the $i$th step in its matching path is towards the diagonal, $\upsilon_i = 0$.
\end{lemma}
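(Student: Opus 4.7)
The plan is to introduce two auxiliary walks associated with $(p,\overline{\sigma})$ and translate both the garage matching condition and the ``towards the diagonal'' hypothesis into structural features of these walks. Let
\[
D_i = \lvert\{k \leq i : \upsilon_k = 2\}\rvert - \lvert\{k \leq i : \upsilon_k = 0\}\rvert,
\]
and let $h_i = y_i - x_i$ denote the signed vertical displacement of the matching path after its first $i$ steps, with $D_0 = h_0 = 0$. The parking function inequality $\sum_{k \leq i}\alpha_k(p) \geq i$, together with $\alpha_k(p) \geq 2$ whenever $\upsilon_k = 2$, forces $D_i \geq 0$ for all $i$ and $D_n = 0$, so $D$ is a non-negative Dyck-like walk. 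Under this framing, a pair $\upsilon_i = \upsilon_j = 2$ satisfies the garage matching condition precisely when $D_{j-1} = D_i$ and $D_\ell \geq D_i$ for $\ell \in [i, j-1]$---equivalently, $i$ and $j$ lie in a common \emph{arch}, meaning a maximal interval on which $D$ is strictly positive.

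For the $(\Leftarrow)$ direction, assume no 2-step is toward the diagonal. Induction on $i$ then shows $|h_i| = D_i$: the 2-step case uses the alignment hypothesis directly; the 0-step case uses $D_i \geq 0$ to force $D_{i-1} \geq 1$, hence $h_{i-1} \neq 0$, so $|h|$ drops by one in lockstep with $D$. Within each arch of $D$ the opener is necessarily a 2-step, and since $|h_\ell| = D_\ell > 0$ throughout the interior of the arch, the sign of $h_\ell$ is preserved by every 0- and 1-step and must coincide with $\overline{\sigma}_\ell$ at every 2-step. Consequently, all 2-steps within a single arch share the opener's sign. Every garage-matched pair $(i,j)$ has $D_\ell \geq D_i \geq 1$ on $[i,j-1]$ and so lies in a single arch, giving $\overline{\sigma}_i = \overline{\sigma}_j$.

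For the $(\Rightarrow)$ direction I argue the contrapositive. Let $j$ be the smallest index whose 2-step is toward the diagonal, and without loss of generality take $\overline{\sigma}_j = +1$ and $h_{j-1} < 0$. Since no earlier 2-step is toward, the identity $|h_\ell| = D_\ell$ holds for $\ell < j$, giving $D_{j-1} = -h_{j-1} > 0$; hence $j-1$ sits in the interior of an arch whose opener $k+1$ has $\overline{\sigma}_{k+1} = \mathrm{sign}(h_{j-1}) = -1$. Pick $i$ to be the largest index with $i \leq j-1$, $\upsilon_i = 2$, and $D_i = D_{j-1}$. Such an $i$ exists because $D$ rises from $D_{k+1} = 1$ to $D_{j-1}$ inside the arch, and by maximality $D_\ell \geq D_i$ on $[i, j-1]$, so $(i,j)$ is a matched pair. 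Any earlier candidate at the same height would require $D$ to return to $0$ strictly between $i$ and $j-1$, contradicting $D_\ell \geq D_i \geq 1$; thus $i$ lies in the same arch as $k+1$, forcing $\overline{\sigma}_i = \overline{\sigma}_{k+1} = -1$ and contradicting $\overline{\sigma}_j = +1$.

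The main obstacle is the careful bookkeeping linking the sign of $\overline{\sigma}_\ell$, the sign of $h_\ell$, and the direction of the $\ell$th step, culminating in the identity $|h_\ell| = D_\ell$ and the resulting sign-locking of $h$ across each arch. Once that structural lemma is in hand, both directions of the equivalence follow from the same observation: garage-matched pairs are precisely pairs of 2-steps inside a common arch, and the alignment hypothesis forces sign constancy on each arch.
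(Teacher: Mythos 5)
Your overall architecture --- sign-locking of $\overline{\sigma}$ across excursions of the path away from the diagonal, and identifying garage-constrained pairs with pairs of $2$-steps inside a common excursion --- is the right idea (the paper states this lemma without proof, treating it as a restatement of the definition). But the foundation of your argument is false: $D_i$ is \emph{not} a non-negative walk and $D_n \neq 0$ in general. Take $p = (1,1,1,1)$, so $\alpha(p) = (4,0,0,0)$ and $\upsilon = (2,0,0,0)$; then $D = (1,0,-1,-2)$. The parking inequality $\sum_{k\le i}\alpha_k(p) \ge i$ and $\alpha_k(p)\ge 2$ at $2$-positions are both \emph{lower} bounds on $\sum_{k\le i}\alpha_k(p)$ and cannot be combined to bound $\#\{0\text{'s}\}$ by $\#\{2\text{'s}\}$; indeed $D_n = 2\#\{2\text{'s}\}+\#\{1\text{'s}\}-n \le 0$, with equality only when every part of $\lambda(p)$ is $1$ or $2$. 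This breaks everything built on it: the identity $|h_i| = D_i$ fails (in the example $|h_3|=0$ but $D_3=-1$; the correct relation is $|h_i| = D_i - \min(0,\min_{j\le i}D_j)$ when no $2$-step is toward the diagonal, because a $0$-step \emph{on} the diagonal does not decrement $|h|$), the claim $D_i \ge 1$ at a $2$-position fails (take $\upsilon = (2,0,0,2,2,0,0,0)$, where $D_4 = 0$), and ``maximal intervals where $D>0$'' do not capture the excursion structure: in that last example the constrained pair $(4,5)$ is invisible to your arch decomposition, yet it is exactly the pair that detects whether step $5$ is a $2$-step toward the diagonal. So both directions of your argument, as written, rest on false statements.

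The fix is to run your argument on $|h_i|$ directly rather than on $D_i$. Under the hypothesis that no $2$-step is toward the diagonal, $|h|$ increases by $1$ at every $2$-step, decreases by $1$ at every $0$-step with $|h|>0$, and is unchanged otherwise; hence for a constrained pair $(i,j)$ one gets $|h_\ell| - |h_i| \ge D_\ell - D_i \ge 0$ on $[i,j-1]$, so the path stays strictly off the diagonal there, $\mathrm{sign}(h_\ell)$ is constant and equals $\overline{\sigma}_i$, and step $j$ being away forces $\overline{\sigma}_j = \mathrm{sign}(h_{j-1}) = \overline{\sigma}_i$. For the converse, with $j$ the first $2$-step toward the diagonal, take $i = \max\{\ell \le j-1 : |h_{\ell-1}| < |h_{j-1}|\}$; since $|h|\ge |h_{j-1}|\ge 1$ on $[i,j-1]$, every $0$-step there decrements, so $D_\ell - D_i = |h_\ell|-|h_i| \ge 0$, making $(i,j)$ a constrained pair with $\overline{\sigma}_i = \mathrm{sign}(h_{j-1}) \ne \overline{\sigma}_j$. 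One further point to address: Definition~\ref{d:garage} constrains every pair whose intermediate word satisfies the prefix (ballot) condition, not only the \emph{balanced} ones you consider; the two families generate the same constraints (an unbalanced ballot pair factors through a chain of balanced ones), but this needs to be said.
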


To a lattice path $L$ we associate the matching $\tau(L)$ where $i$ and $k$ are matched, i.e., $(i,k) \in \tau(L)$, if $i<k$, $L_i$ and $L_k$ intersect the same diagonal $x -y = d + 1/2$ and no intermediate step $L_j$ does.
Restricted to Dyck paths, this is the standard map to non-crossing matchings.
Note for $(i,k) \in \tau(L)$ that $L_i$ must be moving away from the diagonal, so $\upsilon_i = 2$.
Also, note diagonal steps can never be matched.
Therefore, Lemma~\ref{l:garage-path} says $(p,\overline{\sigma})$ with matching path $L$ is a garage if and only if for each $(i,k) \in \tau(L)$ we see $\upsilon_k = 0$.

The following technical lemma will be useful for verifying certain tuples are parking functions.
\begin{lemma}
    \label{l:path-pigeonhole}
    Let $(p,\overline{\sigma})$ be a sorted naive shifted parking function with matching path $L$ and suppose $(i,k) \in \tau(L)$. Then $\sum_{j=i}^{k-1} \upsilon_{j} \geq k-i$. 
\end{lemma}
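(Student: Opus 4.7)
The plan is to analyze the excursion delimited by the matched steps $L_i$ and $L_k$ at diagonal $x - y = d + 1/2$. The paper already notes that $L_i$ moves away from the main diagonal $x = y$, so $\upsilon_i = 2$, which alone contributes $2$ to the target sum. This observation also splits the argument into two symmetric cases: either $L_i$ is a right step (so $d \geq 0$ and the excursion stays strictly below the main diagonal) or $L_i$ is an up step (so $d \leq -1$ and the excursion stays strictly above). In either case, the path avoids the main diagonal throughout $[i, k-1]$, so no negative-diagonal steps occur there, and every step in $[i+1, k-1]$ is an up step, a right step, or a positive-diagonal step.

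In the sub-range $[i+1, k-1]$, the path starts and ends at the floor (when $L_i$ is right) or ceiling (when $L_i$ is up) of the excursion, so the number $r$ of up steps equals the number of right steps and the number of positive-diagonal steps is $(k-i-1) - 2r$. Moreover, when $L_i$ is a right step every $\upsilon_j = 0$ in this sub-range produces an up step (since the path is strictly below the diagonal, by Definition~\ref{d:lattice-path}), and when $L_i$ is an up step every such $\upsilon_j = 0$ produces a right step. Either way the count $c_0$ of $\upsilon_j = 0$ indices in $[i+1, k-1]$ is at most $r$. Tallying contributions: the $(k-i-1) - 2r$ diagonal-step indices contribute $1$ each, the $c_0$ zero-indices contribute $0$, and the remaining $2r - c_0$ indices (necessarily of type $\upsilon_j = 2$) contribute $2$ each. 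Combined with $\upsilon_i = 2$, this yields
\[
\sum_{j=i}^{k-1} \upsilon_j \;=\; (k - i + 1) + 2(r - c_0) \;\geq\; k - i + 1,
\]
beating the claim by one.

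The main obstacle is the initial structural setup: invoking the definition of $(i, k) \in \tau(L)$ together with the \emph{moves away} observation to conclude that the excursion stays on one side of the main diagonal and that $f$ (the signed horizontal distance $x - y$) cannot return to the floor/ceiling except at the two endpoints $L_i$ and $L_k$. Once this is in place, the bound $c_0 \leq r$ is immediate from the forced direction of $\upsilon = 0$ steps in a fixed region, and the remaining arithmetic is routine.
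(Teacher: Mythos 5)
Your proof is correct and follows essentially the same route as the paper's: both arguments observe that the excursion between the matched steps avoids the main diagonal, so the up-steps and right-steps strictly between them are equinumerous and every $\upsilon_j = 0$ step is forced toward the diagonal, which bounds the number of $0$'s by the number of $2$'s. Your explicit tally even delivers the marginally sharper bound $\sum_{j=i}^{k-1} \upsilon_j \geq k-i+1$.
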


\begin{proof}
    If $i$ and $k$ are matched, then there are an equal number of up steps and right steps occurring between them. Furthermore, the path cannot touch the main diagonal between them, so every occurrence of $\upsilon = 0$ moves towards the diagonal. Thus, there are at least as many 2's as 0's in $\{\upsilon_i, \dots, \upsilon_{k-1}\}$ and the result follows.
\end{proof}

Next, we show how garages can be modified to construct other sorted naive shifted parking functions.
\begin{lemma}
    \label{l:modify-naive-matching}
Let $(p,\overline{\sigma})$ be a sorted naive shifted parking function with matching path $L$ so that $(i,k) \in \tau(L)$ with the $2\ell$ total occurrences of $i$ and $k$ in $p$.
For $j \in [\ell]$, construct $(p',\overline{\sigma}')$ from $(p,\overline{\sigma})$ by replacing all occurrences of $i$ and $k$ in $p$ with $2j$ occurrences of $i$ and $2\ell - 2j$ occurrences of $k$, with $\overline{\sigma}'_k = -\overline{\sigma}_i$ if $j \neq \ell$ and $0$ if $j = \ell$.
Then $(p',\overline{\sigma})$ is a naive shifted parking function with matching path $L$.
\end{lemma}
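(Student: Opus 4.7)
The plan is to verify two things: (a) $p'$ is a parking function, and (b) the matching path of $(p', \overline{\sigma}')$ coincides with $L$.

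For (a), write $F(m) = \sum_{v=1}^m \alpha_v(p)$ and define $F'(m)$ analogously; the sorted-parking property is $F(m) \geq m$ pointwise. Since the modification only redistributes the $\alpha_i(p) + \alpha_k(p) = 2\ell$ occurrences of $i$ and $k$, we have $F'(m) = F(m)$ for $m < i$ or $m \geq k$, so parking is inherited at those indices. For $i \leq m < k$, $F'(m) = F(m) - (\alpha_i(p) - 2j)$, so parking reduces to $F(m) \geq m + (\alpha_i(p) - 2j)$. This is immediate when $\alpha_i(p) \leq 2j$. Otherwise --- which in the garage context $\upsilon_k = 0$ forces $\alpha_i(p) = 2\ell$ and $j < \ell$ --- the initial slack $F(i) \geq (i-1) + 2\ell$ follows from $r \geq i$ (parking on $p$), and I propagate this slack to all $m \in [i, k-1]$ using $\alpha_v \geq \upsilon_v$, Lemma~\ref{l:path-pigeonhole}, and the non-crossing matching structure that prevents the interior path from crossing the matched half-diagonal $x - y = d + 1/2$.

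For (b), verify the paths step by step. Steps $L_m$ for $m < i$ agree since $(\upsilon_m, \overline{\sigma}_m)$ is unchanged and positions match by induction. At $m = i$, $(\upsilon'_i, \overline{\sigma}'_i) = (2, \overline{\sigma}_i)$ reproduces $L_i$. For $i < m < k$ the data is unchanged and positions agree inductively, so $L'_m = L_m$. At $m = k$ there are two cases: if $j < \ell$, then $(\upsilon'_k, \overline{\sigma}'_k) = (2, -\overline{\sigma}_i)$ produces a step opposite $L_i$, which agrees with $L_k$ by the definition of $\tau(L)$; if $j = \ell$, then $(\upsilon'_k, \overline{\sigma}'_k) = (0, 0)$ makes the step position-determined, and the unchanged position reproduces the original toward-the-diagonal $L_k$. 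For $m > k$, the position after step $k$ and the remaining data agree with the original, so the remaining steps coincide.

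The main obstacle is the prefix strengthening needed in (a). Lemma~\ref{l:path-pigeonhole} supplies the global bound $\sum_{v=i}^{k-1} \upsilon_v \geq k - i$, but not the prefix inequality $\sum_{v=i+1}^{m} \upsilon_v \geq m - i - 1$ required for intermediate $m$. I expect to bridge this gap via the explicit interior structure: whenever the path is one step away from the matched half-diagonal, a $\upsilon = 0$ step would cross that diagonal and is forbidden, so such positions must have $\upsilon \geq 1$, keeping every prefix sum sufficiently large.
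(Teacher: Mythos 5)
Your proposal is correct and follows essentially the same route as the paper: the matching path is preserved step-by-step by construction, and the only real content is the parking inequality at positions $i\le m<k$, which both you and the paper settle with the lattice-path pigeonhole argument. Your flagged ``obstacle'' is a genuine (and correctly resolved) refinement rather than a gap: the paper simply cites Lemma~\ref{l:path-pigeonhole}, but what is actually needed is the prefix form $\sum_{v=i+1}^{m}\upsilon_v\ge m-i-1$ for each intermediate $m$, and this follows exactly as you sketch, since between the matched steps the path stays weakly on the far side of the half-diagonal crossed by $L_i$, so every prefix of $\upsilon_{i+1}\cdots\upsilon_{k-1}$ contains at least as many $2$'s as $0$'s.
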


\begin{proof}
As previously observed, we must have $\upsilon_i = 2$ for $i$ and $k$ to be matched.
Therefore, by construction we see the matching path of $(p',\overline{\sigma}')$ is $L$.
All that remains is to show $(p',\overline{\sigma}')$ is also a sorted naive shifted parking function, which follows by Lemma~\ref{l:path-pigeonhole}.
\end{proof}

We say two sorted naive shifted parking functions $(p_1, \overline{\sigma_1})$ and $(p_2, \overline{\sigma_2})$ are {\em garage equivalent} if they have the same matching path $L$ and for each $(i,k) \in \tau(L)$, the total number of occurrences of $i$ and $k$ in $(p_1, \overline{\sigma_1})$ is the same as the total number of occurrences of $i$ and $k$ in $(p_2, \overline{\sigma_2})$.
As previously claimed, garage equivalence partitions the set of sorted naive shifted parking functions.

\begin{lemma}
    \label{l:one-garage-equiv}
    Every sorted naive shifted parking function is garage equivalent to exactly one garage.
\end{lemma}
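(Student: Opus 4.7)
My plan is to exhibit a canonical garage in each equivalence class and then argue it is unique using Lemma~\ref{l:garage-path}, with both steps reducing pair-by-pair to Lemma~\ref{l:modify-naive-matching}.

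Given $(p, \overline{\sigma})$ with matching path $L$ and matching $\tau(L)$, I would, for each matched pair $(i, k) \in \tau(L)$ with $2\ell$ total occurrences of $i$ and $k$, apply Lemma~\ref{l:modify-naive-matching} with $j = \ell$: place all $2\ell$ occurrences at $i$, set $\overline{\sigma}^*_k = 0$, keep $\overline{\sigma}^*_i = \overline{\sigma}_i$, and leave everything outside matched pairs untouched. Because matched pairs are disjoint, doing this simultaneously produces a sorted naive shifted parking function $(p^*, \overline{\sigma}^*)$ with the same matching path and matched-pair totals as $(p, \overline{\sigma})$. By construction $\upsilon^*_k = 0$ at every closing index $k$ of a matched pair, i.e., at every step of $L$ moving toward the diagonal, so Lemma~\ref{l:garage-path} certifies that $(p^*, \overline{\sigma}^*)$ is a garage garage-equivalent to $(p, \overline{\sigma})$.

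For uniqueness, I would take any garage $(q, \overline{\tau})$ garage-equivalent to $(p, \overline{\sigma})$; by definition it shares $L$, hence $\tau(L)$, and all matched-pair totals. Fix $(i, k) \in \tau(L)$: the step $L_k$ moves toward the diagonal, so Lemma~\ref{l:garage-path} forces $\upsilon^q_k = 0$. Combined with the total-count constraint, this places all $2\ell$ occurrences at $i$, and the matching-path rule for $\upsilon = 2$ determines $\overline{\tau}_i$ from the direction of $L_i$. Positions outside matched pairs are unaltered by Lemma~\ref{l:modify-naive-matching}, so they agree between $(q, \overline{\tau})$ and $(p^*, \overline{\sigma}^*)$, yielding $(q, \overline{\tau}) = (p^*, \overline{\sigma}^*)$.

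The main subtlety I expect is checking that simultaneously modifying every matched pair still yields a valid sorted parking function rather than just a valid modification pair-by-pair. This should follow cleanly because the matched pairs are disjoint and Lemma~\ref{l:path-pigeonhole} already supplies the order estimate needed within each pair, so the simultaneous modification decomposes into independent single-pair modifications governed by Lemma~\ref{l:modify-naive-matching}.
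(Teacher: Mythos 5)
Your proof is correct and takes essentially the same route as the paper, whose entire proof is the one-line citation of Lemmas~\ref{l:garage-path} and~\ref{l:modify-naive-matching}: you build the canonical garage by applying the $j=\ell$ case of Lemma~\ref{l:modify-naive-matching} to every matched pair, and you get uniqueness from the characterization of garages in Lemma~\ref{l:garage-path}. The one soft spot in your uniqueness step --- asserting that the data at positions outside matched pairs agrees across the whole equivalence class, which does not literally follow from ``same matching path and same pair totals'' --- is inherited from the paper's own (loose) definition of garage equivalence, so your argument is exactly as rigorous as the source's.
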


\begin{proof}
    This follows immediately from Lemmas~\ref{l:garage-path} and \ref{l:modify-naive-matching}.
\end{proof}

Define $\varphi:\NSh(n) \to \Gar(n)$ by mapping each naive shifted parking function to the garage that its sort is garage equivalent to.
Lemma~\ref{l:one-garage-equiv} shows $\varphi$ is well-defined for all $n$. 

From the introduction, recall for $\lambda = (\lambda_1,\dots,\lambda_k)$ that $R_\lambda = R_{\lambda_1} \cdot \ldots \cdot R_{\lambda_k} $ where 
\begin{equation}
    R_m = \begin{cases}
        P_m & m\ \text{odd},\\
        \sum_{i=1}^{m/2} P_{2i} P_{m - 2i} & m\ \text{even}.
    \end{cases}
\end{equation}
\begin{proposition}
\label{p:garage-R}
    For $(p, \overline{\sigma}) \in \Gar(n)$ with $\lambda(p) = \lambda$, $\mathbb{C}[\varphi^{-1}(p, \overline{\sigma})]$ is an $\fkS_n$-submodule of $\mathbb{C}[\NSh(n)]$ whose Frobenius character is
    \[
    \ch(\mathbb{C}[\varphi^{-1}(p, \overline{\sigma})]) = R_\lambda
    \]
\end{proposition}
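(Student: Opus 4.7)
The plan is to combine Lemma~\ref{l:naive-decomp} with the structure of garage equivalence to decompose the Frobenius character as a product over the matching $\tau(L)$. First I would verify that $\mathbb{C}[\varphi^{-1}(p,\overline{\sigma})]$ is $\fkS_n$-stable: the action of $\fkS_n$ permutes coordinates of $(p,\sigma) \in \NSh(n)$, which preserves the sort $(\sort(p),\overline{\sigma})$ and hence preserves its garage-equivalence class. Since $\varphi$ depends only on this class (by Lemma~\ref{l:one-garage-equiv}), each fiber is an $\fkS_n$-submodule. I then decompose
\[
\mathbb{C}[\varphi^{-1}(p,\overline{\sigma})] \;=\; \bigoplus_{(p',\overline{\sigma}')} \mathbb{C}[N_{(p',\overline{\sigma}')}]
\]
over sorted naive shifted parking functions garage-equivalent to $(p,\overline{\sigma})$, so by Lemma~\ref{l:naive-decomp} the Frobenius character equals $\sum V_{\lambda(p')}$.

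The next step is to show this sum factors across matched pairs. By Lemma~\ref{l:modify-naive-matching}, for each matched pair $(i,k) \in \tau(L)$ with total count $2\ell_{ik}$ of $i$ and $k$ in $(p,\overline{\sigma})$, there are exactly $\ell_{ik}$ garage-equivalent modifications at that pair, parametrized by $j_{ik} \in [\ell_{ik}]$ and redistributing the occurrences as $2j_{ik}$ copies of $i$ and $2\ell_{ik} - 2j_{ik}$ copies of $k$, with signs forced by the directions of $L_i$ and $L_k$. Since distinct matched pairs use disjoint indices, these local modifications may be applied independently, yielding a bijection between the garage-equivalence class of $(p,\overline{\sigma})$ and tuples $(j_{ik})_{(i,k)\in\tau(L)}$. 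The shape $\lambda(p')$ is therefore a multiset union: each unmatched position with $\upsilon_i=1$ contributes the odd part $\alpha_i(p)$, and each matched pair contributes either the single even part $2\ell_{ik}$ (when $j_{ik}=\ell_{ik}$) or the two even parts $2j_{ik}$ and $2\ell_{ik}-2j_{ik}$ (when $j_{ik}<\ell_{ik}$).

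Consequently $\sum V_{\lambda(p')}$ factors, with local contribution at each matched pair equal to
\[
P_{2\ell_{ik}} \;+\; \sum_{j=1}^{\ell_{ik}-1} P_{2j}\,P_{2\ell_{ik}-2j} \;=\; R_{2\ell_{ik}},
\]
which is exactly the left-hand side of~\eqref{eq:R-relation}. Unmatched positions with $\upsilon_i=1$ contribute $P_{\alpha_i(p)} = R_{\alpha_i(p)}$ (with $\alpha_i(p)$ odd), and unmatched positions with $\upsilon_i=0$ contribute $1$. Multiplying over all positions yields $R_\lambda$, as desired.

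The main obstacle I anticipate is cleanly justifying the bijection between the garage-equivalence class of $(p,\overline{\sigma})$ and the tuples $(j_{ik})$. One must unpack the matching-path condition to confirm that outside matched pairs the data is rigid: unmatched $\alpha_i$ with $\upsilon_i=1$ cannot vary without altering totals and potentially the path, and the signs $\overline{\sigma}'_i, \overline{\sigma}'_k$ at matched pairs are forced by the prescribed directions of $L_i$ and $L_k$. Lemma~\ref{l:modify-naive-matching} supplies the local constructions and Lemma~\ref{l:one-garage-equiv} supplies exhaustiveness through a single garage representative, but combining these to produce a global product structure across all of $\tau(L)$ is the technical heart of the argument.
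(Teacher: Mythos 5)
Your argument follows the paper's proof essentially verbatim: factor $\varphi$ through the sort map, apply Lemma~\ref{l:naive-decomp} to each sorted class in the fiber, and use Lemma~\ref{l:modify-naive-matching} to parametrize the garage-equivalence class by independent choices $j_{ik}\in[\ell_{ik}]$ at each matched pair, so that the character factors as a product of $R_{2\ell_{ik}}$'s over matched arcs times $P_{\alpha_i(p)}$'s over unmatched odd positions, giving $R_\lambda$. The ``technical heart'' you flag --- that performing the local modifications concurrently and independently exhausts the fiber --- is exactly the step the paper also dispatches with ``this can be done concurrently for each matched pair,'' so your write-up matches the paper's in both route and level of rigor.
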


\begin{proof}
    Note that $\varphi$ factors through the sort map, so we can define $\varphi = \overline{\varphi} \circ \sort$.
    By Lemma~\ref{l:naive-decomp} it suffices to show that $\overline{\varphi}^{-1}((p, \overline{\sigma}))$ is in bijection with terms in the expansion of $R_\lambda$ when $R_{2j}$ is replaced by $\sum_{i=1}^{j} P_{2i} P_{2j - 2i}$.
    Let $L$ be the lattice path of $(p,\overline{\sigma})$ and let $(i,k) \in \tau(L)$ so $\alpha_i(p) + \alpha_k(p) = 2j$.
    By Lemma~\ref{l:modify-naive-matching}, we can modify $(p,\overline{\sigma})$ to produce a sorted naive shifted parking function $(p',\overline{\sigma}')$ with $\alpha_i(p') = 2\ell$ and $\alpha_k(p') = 2j-2\ell$ for any $\ell \in [j]$ and $\overline{\sigma}' = \overline{\sigma}$ unless $\ell = j$, in which case $\overline{\sigma}'_k = 0$. Each choice of $\ell$ corresponds to one term in $R_{2j} = \sum_{i=1}^{j} P_{2i} P_{2j - 2i}$.
    This can be done concurrently for each matched pair, so the result follows. 
\end{proof}

\begin{example}
\label{ex:lattice-path}
    Consider the sorted naive shifted parking function 
    \[
    (p,\overline{\sigma}) = ((1,1,1,1,2,4,4,4,4,5,5,6), (-1,1,0,1,1,-1,0,0,0,0,0,0)) 
    \]
    Then $\upsilon = (2,1,0,2,2,1,0,0,0,0,0,0)$. The lattice path associated to $(p, \overline{\sigma})$ is shown in Figure~\ref{fig:lattice-path}. The matching $\tau(L)$ consists of $(1,3), (4,8), (5,7)$. Since $\upsilon_3= \upsilon_8 = \upsilon_7 = 0$, $(p, \overline{\sigma})$ is a garage. There are four sorted naive shifted parking functions with the same lattice path, obtained by replacing the four 1's in $p$ with two 1's and two 3's or replacing the four 4's in $p$ with two 4's and two 8's. In other words,
    \[
    \varphi^{-1}(p, \overline{\sigma}) = \begin{Bmatrix*}[l] ((1,1,1,1,2,4,4,4,4,5,5,6), (-1,1,0,1,1,-1,0,0,0,0,0,0))\\
    ((1,1,1,1,2,4,4,5,5,6,8,8), (-1,1,0,1,1,-1,0,-1,0,0,0,0))\\
    ((1,1,2,3,3,4,4,4,4,5,5,6), (-1,1,1,1,1,-1,0,0,0,0,0,0))\\
    ((1,1,2,3,3,4,4,5,5,6,8,8), (-1,1,1,1,1,-1,0,-1,0,0,0,0))
    \end{Bmatrix*}
    \]
    The shapes of these four preimages are $(4,4,2,1,1), (4,2,2,2,1,1), (4,2,2,2,1,1)$ and $(2,2,2,2,2,1,1)$ respectively, so by Lemma~\ref{l:naive-decomp}, 
    \[
    \ch(\CC[\varphi^{-1}(p,\overline{\sigma})]) = V_{4,4,2,1} + 2V_{4,2,2,2,1} + V_{2,2,2,2,2,1} =  (P_4+P_2P_2)(P_4+P_2P_2)P_2P_1P_1 = R_{4,4,2,1,1}
    \]
    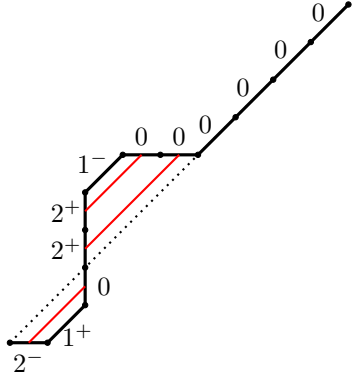
\begin{figure}[h]
    \begin{center}
    \begin{tikzpicture}[scale = .5]
         \draw[thick,dotted] (0,0)--(9,9);

         \draw[very thick] (0,0)--(1,0)--(2,1)--(2,2)--(2,3)--(2,4)--(3,5)--(4,5)--(5,5)--(6,6)--(7,7)--(8,8)--(9,9);
        \draw[fill=black] (0,0) circle (2pt);
        \draw[fill=black] (1,0) circle (2pt);
        \draw[fill=black] (2,1) circle (2pt);
        \draw[fill=black] (2,2) circle (2pt);
        \draw[fill=black] (2,3) circle (2pt);
        \draw[fill=black] (2,4) circle (2pt);
        \draw[fill=black] (3,5) circle (2pt);
        \draw[fill=black] (4,5) circle (2pt);
        \draw[fill=black] (5,5) circle (2pt);
        \draw[fill=black] (6,6) circle (2pt);
        \draw[fill=black] (7,7) circle (2pt);
        \draw[fill=black] (8,8) circle (2pt);
        \draw[fill=black] (9,9) circle (2pt);
        
        \draw[thick, red] (.5,0)--(2,1.5);
        \draw[thick, red] (2,2.5)--(4.5,5);
        \draw[thick, red] (2,3.5)--(3.5,5);

        \node at (.5,-.5) {$2^-$};
        \node at (1.8,.2) {$1^+$};
        \node at (2.5,1.5) {$0$};
        \node at (1.5,2.5) {$2^+$};
        \node at (1.5,3.5) {$2^+$};
        \node at (2.2,4.8) {$1^-$};
        \node at (3.5,5.5) {$0$};
        \node at (4.5,5.5) {$0$};
        \node at (5.2,5.8) {$0$};
        \node at (6.2,6.8) {$0$};
        \node at (7.2,7.8) {$0$};
        \node at (8.2,8.8) {$0$};
    \end{tikzpicture}
    \end{center}
    \caption{The lattice path $L$ associated to the garage in Example~\ref{ex:lattice-path}. Steps are labelled by $\upsilon$ with $\overline{\sigma}$ in the superscripts. The matching $\tau(L)$ is shown in red.}
    \label{fig:lattice-path}
    \end{figure}

\end{example}

Combining Lemma~\ref{l:one-garage-equiv} and Proposition~\ref{p:garage-R}, we have:

\begin{theorem}
    \label{t:garage-R}
    For each $n$, we have
\[
SH_n = \sum_{(p,\overline{\sigma}) \in \Gar(n)} R_{\lambda(p)}.
\]
\end{theorem}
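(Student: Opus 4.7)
The plan is to observe that Theorem~\ref{t:garage-R} is essentially a bookkeeping consequence of the garage-grouping machinery developed in this subsection, together with Corollary~\ref{c:naive-frob}. The starting point is the identity
\[
SH_n = \ch(\CC[\NSh(n)]) = \sum_{(p,\overline{\sigma}) \in \overline{\NSh}(n)} V_{\lambda(p)}
\]
from Corollary~\ref{c:naive-frob} and the discussion preceding it. The goal is to regroup this sum according to garage equivalence classes so that each class contributes a single $R_\lambda$ term.

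First I would check that $\varphi : \NSh(n) \to \Gar(n)$ partitions $\NSh(n)$ into fibers indexed by $\Gar(n)$; this is immediate from Lemma~\ref{l:one-garage-equiv}, which asserts that each sorted naive shifted parking function is garage equivalent to exactly one garage (and $\varphi$ factors through the sort map). Since the $\fkS_n$-action on $\NSh(n)$ preserves the sort and garage equivalence is defined purely in terms of the sort and the matching path, each fiber $\varphi^{-1}(p,\overline{\sigma})$ is $\fkS_n$-stable, yielding a direct sum decomposition
\[
\CC[\NSh(n)] = \bigoplus_{(p,\overline{\sigma}) \in \Gar(n)} \CC[\varphi^{-1}(p,\overline{\sigma})]
\]
of $\fkS_n$-modules.

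Next I would apply Proposition~\ref{p:garage-R} to each summand, which identifies the Frobenius character of $\CC[\varphi^{-1}(p,\overline{\sigma})]$ as $R_{\lambda(p)}$. Taking Frobenius characters of both sides of the direct sum decomposition and invoking Corollary~\ref{c:naive-frob} on the left then yields
\[
SH_n = \sum_{(p,\overline{\sigma}) \in \Gar(n)} R_{\lambda(p)},
\]
as desired.

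There is no real obstacle here since all of the substantive combinatorial work, namely verifying well-definedness of $\varphi$ via Lemmas~\ref{l:garage-path}--\ref{l:one-garage-equiv} and computing the character of each fiber, has already been carried out. The only thing to be careful about is confirming that $\fkS_n$ acts within each fiber, which follows because $\varphi$ only depends on the sort of a naive shifted parking function and the $\fkS_n$-action preserves sorts.
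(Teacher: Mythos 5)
Your proof is correct and follows the paper's argument exactly: the paper likewise derives the theorem by combining Lemma~\ref{l:one-garage-equiv} (which partitions $\NSh(n)$ into fibers of $\varphi$ indexed by garages) with Proposition~\ref{p:garage-R} (which computes each fiber's Frobenius character as $R_{\lambda(p)}$), against the baseline identity from Corollary~\ref{c:naive-frob}. Your added remark about $\fkS_n$-stability of the fibers is a sensible explicit check that the paper leaves implicit.
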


\subsection{Odd shifted parking functions}
\label{ss:odd}
We now introduce odd shifted parking functions, which are the key objects in Theorem~\ref{t:odd}.

\begin{definition}
\label{d:odd}
    An {\em odd shifted parking function} is a triple $(p, \sigma, \tau)$, where $p$ is a parking function whose shape is odd, $\sigma \in \{-1,1\}^n$, and $\tau$ is a noncrossing matching of the integers appearing in $p$ satisfying:
    \begin{enumerate}
        \item If $(a,b) \in \tau$, then $\overline{\sigma}_a = -\overline{\sigma}_b$
        \item If $(a,c) \in \tau$, and $a<b<c$, then $b$ appears in $p$.
        \item If $(a,d) \in \tau$, $(b,c) \in \tau$ and $a<b<c<d$, then $\overline{\sigma}_a = \overline{\sigma}_b$
    \end{enumerate}
where
\[
\overline{\sigma}_k = \begin{cases}
    \prod_{p_i = k} \sigma_i & \alpha_k(p) > 0\\
    0 & \alpha_k(p) = 0
\end{cases}
\] is defined as it was for naive shifted parking functions.

For $(p, \sigma, \tau)$ an odd parking function, $\sort(p,\sigma, \tau)$ is the triple $(\sort(p), \overline{\sigma}, \tau)$, 
A \emph{sorted odd parking function} is a triple $\sort(p,\sigma, \tau)$ of some odd parking function.
Let $\OSh(n)$ be the set of odd shifted parking functions of size $n$ and $\overline{\OSh}(n)$ be the set of sorted odd shifted parking functions of size $n$.
\end{definition}

Let $\fkS_n$ act on $(p,\sigma, \tau) \in \OSh(n)$ by $\pi \cdot (p, \sigma, \tau) = (\pi \cdot (p,\sigma), \tau)$, i.e. $\fkS_n$ acts as in the naive case on $p$ and $\sigma$ and leaves $\tau$ fixed.
Since the action of $\fkS_n$ does not change $\sort(p,\sigma, \tau)$, $\mathbb{C}[\OSh(n)]$ decomposes into submodules indexed by sorted odd shifted parking functions.
For $(p,\overline{\sigma},\tau)$ a sorted odd shifted parking function, define
\begin{equation}
    \label{eq:odd-class}
O_{(p, \overline{\sigma}, \tau)} = \{ (p',\sigma', \tau') \in \OSh(n) \mid \sort(p',\sigma',\tau') = (p,\overline{\sigma},\tau)\}.
\end{equation}
Necessarily $\tau' = \tau$ in~\eqref{eq:odd-class}.
\begin{lemma}
\label{l:odd-decomp}
    Let $(p, \overline{\sigma}, \tau)$ be a sorted odd shifted parking function where $p$ has shape $\lambda$.
    Then $\mathbb{C}[O_{(p, \overline{\sigma}, \tau)}]$ is an $\mathfrak{S}_n$-submodule of $\mathbb{C}[\OSh(n)]$ with character $V_\lambda$.
\end{lemma}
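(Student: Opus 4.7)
The plan is to mirror the proof of Lemma~\ref{l:naive-decomp} closely, since the only genuinely new feature in the odd setting is the non-crossing matching $\tau$, which is inert under the $\fkS_n$-action. First I would verify that $\CC[O_{(p,\overline{\sigma},\tau)}]$ is actually an $\fkS_n$-submodule: the action $\pi \cdot (p',\sigma',\tau) = (\pi \cdot (p',\sigma'),\tau)$ preserves $\sort(p',\sigma')$ (as in the naive case) and fixes $\tau$ by fiat, while the defining conditions (1)--(3) of Definition~\ref{d:odd} depend only on $\overline{\sigma}$ and on the set of integers appearing in $p'$, both of which are invariant under permutations of the entries of $(p',\sigma')$. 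Hence $O_{(p,\overline{\sigma},\tau)}$ is an $\fkS_n$-stable subset of $\OSh(n)$.

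Next, let $\alpha$ be the content of $p$ and set
\[
O'_{(p,\overline{\sigma},\tau)} = \{(p',\sigma',\tau) \in O_{(p,\overline{\sigma},\tau)} \mid p' = p\}.
\]
Then $\CC[O'_{(p,\overline{\sigma},\tau)}]$ is closed under $\fkS_\alpha$, and its $\fkS_n$-closure is all of $\CC[O_{(p,\overline{\sigma},\tau)}]$. As an $\fkS_\alpha$-module, I claim it factors as
\[
\CC[O'_{(p,\overline{\sigma},\tau)}] \cong \bigotimes_{i:\alpha_i > 0} \bigwedge\nolimits^{\overline{\sigma}_i}(\CC^{\alpha_i}),
\]
with $\bigwedge^{+1}$ and $\bigwedge^{-1}$ denoting the even and odd exterior subalgebras exactly as in Lemma~\ref{l:naive-decomp}. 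The point is that fixing the common sort $(p,\overline{\sigma},\tau)$ forces, at each $i$ with $\alpha_i > 0$, the parity of the $\sigma'_j$-values over positions $j$ with $p_j = i$ to equal $\overline{\sigma}_i$, and selecting such a parity-constrained sign vector corresponds precisely to choosing an even- or odd-degree element of $\bigwedge(\CC^{\alpha_i})$.

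Applying Observation~\ref{o:exterior} to each tensor factor together with the induction product formula~\eqref{eq:induct-prod} then yields
\[
\ch\bigl(\CC[O_{(p,\overline{\sigma},\tau)}]\bigr) = \prod_{i:\alpha_i > 0} P_{\alpha_i} = V_\lambda,
\]
since the nonzero entries of $\alpha$ are a rearrangement of the parts of $\lambda = \lambda(p)$. The only conceptual point — and really the only place the statement differs from Lemma~\ref{l:naive-decomp} — is the observation that $\tau$, together with the odd-shape requirement on $p$, constrains only the sorted data $(\sort(p),\overline{\sigma})$ and thus travels along passively during the $\fkS_n$-action. Once that is in hand, I do not foresee a serious obstacle; the remainder is the same exterior-algebra bookkeeping carried out in the naive case.
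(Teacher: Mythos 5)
Your proposal is correct and follows essentially the same route as the paper: the key point in both is that $\tau$ is inert under the $\fkS_n$-action, so the module is really just $\CC[N_{(p,\overline{\sigma})}]$ in disguise. The paper simply cites Lemma~\ref{l:naive-decomp} at that point rather than re-running its exterior-algebra decomposition inline as you do, but the content is identical.
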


\begin{proof}
    Since the $\fkS_n$--action on odd shifted parking functions leaves $\tau$ fixed, $\mathbb{C}[O_{(p, \overline{\sigma}, \tau)}] \cong \mathbb{C}[N_{(p, \overline{\sigma})}]$ and the result follows from Lemma~\ref{l:naive-decomp}
\end{proof}

To show the Frobenius character of $\mathbb{C}[\OSh(n)]$ is $SH_n$, we construct an analogue of $\varphi$ mapping  odd shifted parking functions to garages.
From this, the result will follow by Theorem~\ref{t:garage-R}.
To each odd shifted parking function we associate a garage in the following way.
\begin{definition}
    \label{d:odd-to-R}
    Let $(p, \sigma, \tau) \in \OSh(n)$.
    Construct $p'$ where:
    \begin{itemize}
        \item if $p_i =a$ and $a$ is unmatched or a left endpoint of an arc $(a,b)$ in $\tau$, then $p'_i=a$;
        \item if $p_i = b$ and $b$ is the right endpoint of an arc $(a,b)$ in $\tau$, then $p'_i = a$.
    \end{itemize}
    Here $\overline{\sigma}'$ is the restriction of $\overline{\sigma}$ to the set of integers appearing in $p'$.
    Define $\varphi_o: \OSh(n) \rightarrow \Gar(n)$ by
    \[
    \varphi_o(p, \sigma, \tau)  = (\sort(p'), \overline{\sigma}').
    \]
\end{definition}
Note for $(p,\sigma,\tau) \in \OSh(n)$ that the tuple $\upsilon$ associated to $\varphi_o((p,\sigma,\tau))$ is defined by
\begin{equation}
    \upsilon_i = \begin{cases} 2 & i \textrm{ is a left endpoint in } \tau,\\ 1 & i \textrm{ is unmatched } \tau,\\0 & i \textrm{ is a right endpoint in } \tau.\end{cases}
\label{eq:odd-upsilon}
\end{equation}

\begin{proposition}
    The map $\varphi_o$ is well defined.
\end{proposition}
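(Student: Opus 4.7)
The plan is to show that $\varphi_o(p,\sigma,\tau) = (\sort(p'),\overline{\sigma}')$ lies in $\Gar(n)$, i.e., that it is a sorted naive shifted parking function satisfying the garage condition of Definition~\ref{d:garage}.

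First I would verify that $p'$ is a parking function: by construction each entry $p_i$ is either unchanged or replaced by the strictly smaller left endpoint of an arc of $\tau$ whose right endpoint equals $p_i$, so $p'_i \leq p_i$ componentwise, and componentwise decrease cannot increase the $k$th smallest entry. The tuple $\overline{\sigma}'$ is nonzero exactly on integers appearing in $p'$ by construction, so $(\sort(p'),\overline{\sigma}')$ is a valid sorted naive shifted parking function. I would then confirm~\eqref{eq:odd-upsilon} by casework, using that $\mathrm{shape}(p)$ is odd so every integer appearing in $p$ has odd multiplicity: an unmatched $k$ satisfies $\alpha_k(p') = \alpha_k(p)$ odd, so $\upsilon_k = 1$; a left endpoint $a$ of $(a,b)\in\tau$ satisfies $\alpha_a(p') = \alpha_a(p) + \alpha_b(p)$ even and positive, so $\upsilon_a = 2$; and a right endpoint $b$ (or any integer not appearing in $p$) has $\alpha_b(p') = 0$, so $\upsilon_b = 0$.

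For the garage condition, I would take $i < j$ with $\upsilon_i = \upsilon_j = 2$, write $(i,i')$ and $(j,j')$ for the corresponding arcs, and establish the structural claim $i' > j$. If instead $i' < j$, then condition~(2) of Definition~\ref{d:odd} guarantees every integer in $(i,i')$ appears in $p$, so every $\upsilon$-value at positions in $\{i+1,\dots,i'-1\}$ comes from $\tau$, and non-crossing of $\tau$ forces every arc with an endpoint in this range to have both endpoints there. Thus the $2$'s and $0$'s strictly before $i'$ in the subword balance perfectly, and then $\upsilon_{i'} = 0$ causes the prefix ending at $i'$ to have strictly more $0$'s than $2$'s, violating the prefix balance condition. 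With $i' > j$, non-crossing of $\tau$ then forces $j' < i'$, giving $i < j < j' < i'$, so condition~(3) of Definition~\ref{d:odd} yields $\overline{\sigma}_i = \overline{\sigma}_j$; since $\overline{\sigma}'$ is the restriction of $\overline{\sigma}$, this gives $\overline{\sigma}'_i = \overline{\sigma}'_j$, which is what the garage condition demands.

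The main obstacle is the structural claim $i' > j$: its proof interlocks three ingredients—condition~(2) of Definition~\ref{d:odd} (to rule out absent integers in $(i,i')$ contributing spurious $0$'s), the non-crossing property of $\tau$ (so arcs with an endpoint inside $(i,i')$ are nested inside it), and the prefix balance condition itself (the extra $0$ at $i'$ is fatal). Once this is established, condition~(3) of Definition~\ref{d:odd} supplies exactly the sign equality demanded by the garage condition, while condition~(1) is not directly needed here as it controls the signs of right endpoints of $\tau$ rather than the garage constraint itself.
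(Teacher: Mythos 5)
Your proposal is correct and follows essentially the same route as the paper: verify the formula for $\upsilon$ by casework on whether an integer is unmatched, a left endpoint, or a right endpoint, then reduce the garage sign condition to producing nested arcs $i<j<j'<i'$ and invoking condition~(3) of Definition~\ref{d:odd}. You actually supply two details the paper elides --- that $p'$ remains a parking function because it decreases componentwise, and the prefix-counting argument justifying $i'>j$, which the paper simply asserts --- so your write-up is, if anything, more complete.
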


\begin{proof}
    Consider $\varphi_o$ applied to $(p,\overline{\sigma},\tau) \in \OSh(n)$ with $\upsilon$ as in~\eqref{eq:odd-upsilon}.
    If $\upsilon_i =2$, then $i$ was the left endpoint of an arc $(i,j) \in \tau$ so $\alpha_i(p') = \alpha_i(p) + \alpha_j(p)$.
    Since the shape of $p$ is odd, this quantity is a positive even integer.
    If $\upsilon_i =1$, then $i$ was unmatched in $\tau$ so $\alpha_i(p') = \alpha_i(p)$. 
    If $\upsilon_i=0$, then $i$ was a right endpoint of an arc in $\tau$ so $\alpha_i(p') = 0$.
    
    All that remains is to demonstrate $(p',\overline{\sigma}')$ satisfies the lattice word property.
    To see this, consider $i<j$ with $\upsilon_i = \upsilon_j =2$ so that the word $\upsilon_{i}\upsilon_{i+1} \cdots \upsilon_{j-1} $ has as many $2$'s as $0$'s in any prefix.
    Since the $2$'s correspond to left end points and the $0$'s to right end points in $\tau$, there must exist arcs $(i,\ell) \in \tau$ and $(j,k) \in \tau$.
    Moreover we see $\ell > j$.
    Since $\tau$ is non-crossing, we have $i<j<k<\ell$ and thus $\overline{\sigma}(i) = \overline{\sigma}(j)$.
    This implies the lattice word condition, so the image of $\varphi_o$ is a garage as desired.
\end{proof}

\begin{proposition}
\label{p:odd-R}
    Let $(p, \overline{\sigma}) \in \Gar(n)$ with shape $\lambda$. Then $\mathbb{C}[\varphi_o^{-1}(p, \overline{\sigma})]$ is an $\mathfrak{S}_n$-submodule of $\mathbb{C}[\OSh(n)]$ with character \[
    \ch(\mathbb{C}[\varphi_o^{-1}(p, \overline{\sigma})]) = R_\lambda
    \]
\end{proposition}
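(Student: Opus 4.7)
The plan is to parallel the proof of Proposition~\ref{p:garage-R}. First, I would observe that the map $\varphi_o$ factors through the sort operation, so each fiber $\varphi_o^{-1}(p,\overline{\sigma})$ is a union of sort-classes $O_{(p',\overline{\sigma}',\tau)}$ from~\eqref{eq:odd-class}. Thus $\mathbb{C}[\varphi_o^{-1}(p,\overline{\sigma})]$ is an $\fkS_n$-submodule of $\mathbb{C}[\OSh(n)]$, and by Lemma~\ref{l:odd-decomp} its character equals $\sum V_{\lambda(p')}$ summed over sorted odd shifted parking functions in the fiber.

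Next I would parameterize the fiber. Let $L$ be the matching path of $(p,\overline{\sigma})$ and $\tau=\tau(L)$. From Definition~\ref{d:odd-to-R}, any $(p',\overline{\sigma}',\tau')$ in the fiber must have $\tau' = \tau$. Inverting the merging operation integer by integer: at every unmatched $i$ (where $\upsilon_i = 1$) we need $\alpha_i(p') = \alpha_i(p)$ and $\overline{\sigma}'_i = \overline{\sigma}_i$; and at every arc $(a,b)\in \tau$ (where $\upsilon_a = 2$ and $\upsilon_b = 0$), writing $\alpha_a(p) = 2m_a$, the split $(\alpha_a(p'),\alpha_b(p'))$ must be one of the $m_a$ ordered pairs of positive odd integers summing to $2m_a$, with $\overline{\sigma}'_a = \overline{\sigma}_a$ determined by the restriction convention defining $\varphi_o$ and $\overline{\sigma}'_b = -\overline{\sigma}_a$ forced by Definition~\ref{d:odd}(1).

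Third, I would verify that each resulting triple is indeed a sorted odd shifted parking function. The odd-shape requirement and Definition~\ref{d:odd}(1) are immediate from the construction; Definition~\ref{d:odd}(2) holds because, by non-crossing, every integer strictly between $a$ and $c$ for an arc $(a,c) \in \tau$ is unmatched, a left endpoint, or a right endpoint of $\tau$, and in all three cases appears in $p'$; Definition~\ref{d:odd}(3) for nested arcs with left endpoints $i < j$ reduces to the garage lattice-word condition, which forces $\overline{\sigma}_i = \overline{\sigma}_j$. The only genuinely delicate check is that $p'$ is a parking function, since redistributing $2m_a$ balls from $a$ to $b$ can decrease $\sum_{j \leq t} \alpha_j(p')$ for $a \leq t < b$. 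I would handle this by an argument analogous to that of Lemma~\ref{l:modify-naive-matching}, invoking Lemma~\ref{l:path-pigeonhole} on each arc of $\tau$ straddling $t$ to control the cumulative deficit. This parking-function verification is the main obstacle.

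Finally, summing $V_{\lambda(p')}$ over the fiber and factoring over unmatched integers and arcs yields
\[
\ch(\mathbb{C}[\varphi_o^{-1}(p,\overline{\sigma})]) = \prod_{i \text{ unmatched}} P_{\alpha_i(p)} \cdot \prod_{(a,b) \in \tau} \sum_{s=1}^{m_a} P_{2s-1}\, P_{2m_a - 2s + 1}.
\]
Since $\alpha_i(p)$ is odd at unmatched $i$, $P_{\alpha_i(p)} = R_{\alpha_i(p)}$, and by~\eqref{eq:R-relation} the inner sum at each arc equals $R_{2m_a}$. The product reassembles to $R_\lambda$, completing the proof.
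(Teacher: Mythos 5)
Your proposal is correct and follows essentially the same route as the paper's proof: factor $\varphi_o$ through sort, apply Lemma~\ref{l:odd-decomp}, identify the fiber over $(p,\overline{\sigma})$ with the choices of odd splits $2m_a = (2s-1)+(2m_a-2s+1)$ at each arc (i.e., with the terms of $R_{2m_a} = \sum_i P_{2i-1}P_{2(m_a-i)+1}$), and invoke Lemma~\ref{l:path-pigeonhole} to confirm each choice yields a valid parking function. The paper dispatches the verification of Definition~\ref{d:odd}(1)--(3) as ``clear''; your explicit checks of those three conditions are a faithful (and welcome) elaboration rather than a different argument.
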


\begin{proof}
   As in the proof of Proposition~\ref{p:garage-R} $\varphi_o $ factors through the sort map, so we can define $\varphi_o = \overline{\varphi}_o \circ \sort$.
    By Lemma~\ref{l:odd-decomp} it suffices to show that $\overline{\varphi}_o^{-1}((p, \overline{\sigma}))$ is in bijection with terms in the $\{V_\lambda:\lambda \textrm{ odd}\}$--basis expansion of $R_\lambda$.
    But this is clear, since for $(p', \overline{\sigma}', \tau) \in \overline{\varphi}_o^{-1}(p,\overline{\sigma})$ the following are determined: $\overline{\sigma}'$, $\tau$, $\alpha_c(p')$ for $c$ unmatched in $\tau$ and the quantity $2k = \alpha_a(p') + \alpha_b(p')$ for each pair $(a,b) \in \tau$.
    The only choice comes from how many $a$'s appear, and this corresponds to a choice of a term in the sum
    \[
    R_{2k} = \sum_{i=1}^k P_{2i-1}\cdot P_{2(k-i)+1}
    \]
    By Lemma~\ref{l:path-pigeonhole}, each choice corresponds to a valid sorted odd shifted parking function so the result follows by making all possible choices for each matched pair.
\end{proof}

\begin{example}
\label{ex:odd-to-garage}
    Continuing Example~\ref{ex:lattice-path}, consider the garage
    \[
    (p,\overline{\sigma}) = ((1,1,1,1,2,4,4,4,4,5,5,6), (-1,1,0,1,1,-1,0,0,0,0,0,0)) 
    \]
    There are four sorted odd shifted parking functions which map to $(p,\overline{\sigma})$, obtained by replacing the four 1's in $p$ with three 1's and a 3 (or one 1 and three 3's), replacing the four 4's in $p$ with three 4's and an 8 (or a 4 and three 8's), replacing the two 5's with a 5 and a 7, then recording the matching $\tau$. In other words,
    \[
    \varphi^{-1}(p, \overline{\sigma}) = \begin{Bmatrix*}[l] ((1,1,1,2,3,4,4,4,5,6,7,8), (-1,1,0,1,1,-1,0,0,0,0,0,0), \{(1,3),(4,8),(5,7)\})\\
    ((1,1,1,1,2,4,5,6,7,8,8,8), (-1,1,0,1,1,-1,0,-1,0,0,0,0), \{(1,3),(4,8),(5,7)\})\\
    ((1,2,3,3,3,4,4,4,5,6,7,8), (-1,1,1,1,1,-1,0,0,0,0,0,0), \{(1,3),(4,8),(5,7)\})\\
    ((1,2,3,3,3,4,5,6,7,8,8,8), (-1,1,1,1,1,-1,0,-1,0,0,0,0), \{(1,3),(4,8),(5,7)\})
    \end{Bmatrix*}
    \]
    The shapes of these four preimages are all $(3,3,1,1,1,1,1,1)$, so by Lemma~\ref{l:odd-decomp}, 
    \[
    \ch(\CC[\varphi^{-1}(p,\overline{\sigma})]) = 4V_{3,3,1,1,1,1,1,1} =  (2P_3P_1)(2P_3P_1)(P_1P_1)P_1P_1 = R_{4,4,2,1,1}
    \]

\end{example}

\begin{proof}[Proof of Theorem~\ref{t:odd}]
    By Theorem~\ref{t:garage-R} and Proposition~\ref{p:odd-R}, the result follows.
\end{proof}

As in the naive case, we can also define an action of the double cover $\fkS_n^-$ on $\CC[\OSh]$. Much as the exterior algebra decomposes into its even and odd parts, a Clifford algebra on an odd number of generators decomposes as $\cl_{2k+1} \cong \cl_{2k+1}^{1} \oplus \cl_{2k+1}^{-1}$, where $\cl_{2k+1}^{1}$ denotes the even subalgebra of $\cl_{2k+1}$ and $\cl_{2k+1}^{-1}$ denotes the odd part.

Identify $\CC[\OSh]$ with $\bigoplus_{(p, \overline{\sigma}, \tau) \in \overline{\OSh}} \CC[\fkS_n^-] \otimes_{\CC[\fkS_{\alpha(p)}^-]} \cl_{\alpha(p)_1}^{\overline{\sigma}_1} \otimes \cdots \otimes \cl_{\alpha(p)_{n}}^{\overline{\sigma}_n}$ via
\[
(p, \sigma, \tau) \mapsto \pi \otimes \xi_{i_1}\cdots \xi_{i_k}
\]
 where $\pi \cdot p = \sort(p)$ and $\{i_1 \leq \cdots \leq i_k\}$ is the set of indices $i$ for which $\sigma_{\pi(i)} = -1$. We can consider $\CC[\OSh]$ to be a $\fkS_n^-$ module under this identification, and we have the following.
\begin{corollary}
    \label{c:odd-projective}
    The spin character of $\CC[\OSh]$ is $2^{n/2}SH_n$
\end{corollary}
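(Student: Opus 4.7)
The plan is to follow the strategy used for the analogous result about $\CC[\NSh]$ after Proposition~\ref{p:char-of-clifford}: decompose $\CC[\OSh]$ into tractable summands, compute the spin character of each, and then sum. By the direct-sum identification given in the statement, $\CC[\OSh]$ breaks into a sum over sorted odd shifted parking functions $(p,\overline{\sigma},\tau)$ of induction products $\CC[\fkS_n^-] \otimes_{\CC[\fkS_{\alpha(p)}^-]} \cl_{\alpha_1}^{\overline{\sigma}_1} \otimes \cdots \otimes \cl_{\alpha_n}^{\overline{\sigma}_n}$. Using the spin analogue of the induction-product formula~\eqref{eq:induct-prod} developed in~\cite{stembridge}, the spin character of each such summand factors as the product in $\SymP$ of the spin characters of its Clifford tensor factors.

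The crucial input is the formula $\ch'(\cl_{2k+1}^{\pm 1}) = 2^{(2k+1)/2} P_{2k+1}$ for each $k \geq 0$. Since the shape of $p$ is odd by definition of an odd shifted parking function, every positive $\alpha_i$ is odd, so only Clifford algebras on an odd number of generators appear. Proposition~\ref{p:char-of-clifford} already gives $\ch'(\cl_{2k+1}) = 2^{(2k+1)/2+1} P_{2k+1}$; I would then show that the two halves carry equal spin characters by observing that when $n = 2k+1$ is odd, the element $\omega = \xi_1 \cdots \xi_n$ is central in $\cl_n$ and has odd degree, so multiplication by $\omega$ interchanges $\cl_n^{1}$ and $\cl_n^{-1}$ while commuting with the left $\fkS_n^-$-action, yielding equal characters on the two summands.

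Combining the two steps, the summand indexed by $(p,\overline{\sigma},\tau)$ has spin character $\prod_{\alpha_i > 0} 2^{\alpha_i/2} P_{\alpha_i} = 2^{n/2} V_{\lambda(p)}$, since the nonzero $\alpha_i$ sum to $n$ and the product of the corresponding $P_{\alpha_i}$'s is exactly $V_{\lambda(p)}$. Summing over all $(p,\overline{\sigma},\tau) \in \overline{\OSh}(n)$ and applying Theorem~\ref{t:odd} then gives $2^{n/2} SH_n$, as desired.

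The main obstacle is verifying that $\cl_{2k+1}^{1}$ and $\cl_{2k+1}^{-1}$ really are $\fkS_{2k+1}^-$-submodules of $\cl_{2k+1}$ under left multiplication: the image of each Coxeter generator $s_i$ in $\cl_n$ is a degree-one element, so naive left multiplication swaps the even and odd subalgebras rather than preserving them. Reconciling this with the identification $(p,\sigma,\tau) \mapsto \pi \otimes \xi_{i_1} \cdots \xi_{i_k}$ in the excerpt likely requires reinterpreting the decomposition via the central idempotents $(1 \pm \omega)/2 \in \cl_{2k+1}$ or otherwise twisting the $\fkS_n^-$-action accordingly; once this technicality is pinned down, the trace calculation from Proposition~\ref{p:char-of-clifford} adapts directly to give the spin character of each half.
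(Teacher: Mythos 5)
Your argument is essentially the paper's own proof, which is literally the one-line combination of Theorem~\ref{t:odd}, Proposition~\ref{p:char-of-clifford}, and Stembridge's induction-product formula for spin characters [Thm.~5.3] --- i.e.\ exactly the decompose--multiply--sum computation you spell out. The technicality you flag at the end is genuine but your proposed fix is the right one: replacing the even/odd splitting by the central idempotents $\tfrac{1}{2}(1 \pm c\,\xi_1\cdots\xi_{2k+1})$ gives honest left $\fkS_{2k+1}^-$-submodules, and the cross-term trace $\mathrm{tr}(L_{\pi_\lambda \omega})$ vanishes because the image of $\pi_\lambda$ in $\cl_{2k+1}$ is supported in degrees at most $(2k+1)-\ell(\lambda) < 2k+1$, so each summand carries spin character $2^{(2k+1)/2}P_{2k+1}$ as you claim.
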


\begin{proof}
    This follows from Theorem~\ref{t:odd}, Proposition~\ref{p:char-of-clifford}, and \cite[Theorem 5.3]{stembridge}.
\end{proof}

\section{Problems and Extensions}
\label{s:final}

\subsection{Enumerative aspects}
\label{ss:final-enumerative}

From the definition, it is easy to see
\[
\#\{(p,\overline{\sigma}) \in \NSh(n):\lambda(p) = \mu\} = 2^\ell(\mu) \Krew(\mu).
\]
Since there are direct combinatorial arguments showing $\Krew(\mu)$ counts sorted parking functions with shape $\mu$, this identity has a combinatorial proof.
By contrast, we have the open problem:
\begin{problem}
    \label{prob:odd-krew}
    For $\mu \vdash n$ odd, give a direct combinatorial argument showing
\[
\OKrew(\mu) = \# \{(p,\overline{\sigma}) \in \overline{\OSh}(n): \lambda(p) = \mu\}.
\]
\end{problem}

One potentially useful fact for tackling Problem~\ref{prob:odd-krew} is the observation:

\begin{proposition}
    \label{p:odd-krew}
    For $\mu \vdash n$ odd and $\ell = \ell(\mu)$, we have
    \[
    \OKrew(\mu) = \Krew(\mu) \cdot {\binom{n+\ell}{\frac{n+\ell}{2}}}{\Bigg 
    /} {\binom{n}{\frac{n+\ell}{2}}}
    \]
\end{proposition}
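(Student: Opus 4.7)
The plan is to verify the identity by direct manipulation of the closed-form formulas for $\OKrew(\mu)$ from Theorem~\ref{t:odd-v} and for $\Krew(\mu)$ extracted from~\eqref{eq:parking-symm}. Since $\mu$ is odd, $n$ and $\ell = \ell(\mu)$ have the same parity, so $k = (n+\ell)/2$ and $j = (n-\ell)/2$ are non-negative integers with $n = k+j$ and $\ell = k-j$. Expanding the target ratio of binomials yields
$$\binom{n+\ell}{(n+\ell)/2}\Bigg/\binom{n}{(n+\ell)/2} = \frac{(2k)!\,j!}{k!\,n!},$$
so the proof reduces to showing that $\OKrew(\mu)/\Krew(\mu)$ equals this expression.

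First I would form the ratio $\OKrew(\mu)/\Krew(\mu)$. Because $\mu$ is odd we have $m_i(\mu) = 0$ for all even $i$, so $\binom{\ell}{m_1,m_3,\dots} = \ell!/\prod_i m_i(\mu)!$; this collapses against the multinomial appearing in $\Krew(\mu)$, as does the factor $(n-\ell+1)$, leaving
$$\frac{\OKrew(\mu)}{\Krew(\mu)} = 2^{\ell}\,(n+\ell-1)(n+\ell-3)\cdots(n-\ell+3)\cdot\frac{(n-\ell+1)!}{n!}.$$
All dependence on the multiplicities $m_i(\mu)$ has vanished and the remaining expression depends only on $n$ and $\ell$.

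Next I would convert the descending product of odd integers into double factorials: $(n+\ell-1)(n+\ell-3)\cdots(n-\ell+3) = (2k-1)!!/(2j+1)!!$, which is an $(\ell-1)$-term product that degenerates correctly to $1$ when $\ell=1$. Writing $(n-\ell+1)! = (2j+1)! = 2^{j}\,j!\,(2j+1)!!$, the double factorials in the numerator and denominator cancel cleanly, the powers of $2$ merge via $\ell+j=k$, and applying the identity $(2k)! = 2^{k}\,k!\,(2k-1)!!$ produces exactly $(2k)!\,j!/(k!\,n!)$, as required.

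The main obstacle is essentially bookkeeping --- tracking the powers of $2$ and normalizing the double factorials correctly, with the edge case $\ell = 1$ deserving particular attention since both the odd-integer product and one of the double factorials degenerate. A sanity check on $\mu = (1), (3), (1,1,1)$ should confirm that both sides evaluate to $1, 2, 20$ respectively.
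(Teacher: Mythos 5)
Your argument is correct and is essentially the computation in the paper: both proofs reduce the claim to rewriting the odd product $(n+\ell-1)(n+\ell-3)\cdots(n-\ell+3)$ in factorial form (your double-factorial bookkeeping with $k=(n+\ell)/2$, $j=(n-\ell)/2$ reproduces exactly the paper's identity $(n+\ell-1)\cdots(n-\ell+3)=\frac{(n+\ell)!\,(\frac{n-\ell}{2})!}{2^{\ell}\,(\frac{n+\ell}{2})!\,(n-\ell+1)!}$) after the multiplicities $m_i(\mu)$ cancel between the two multinomials. One small caveat on your sanity check: $\OKrew((1))=2$, not $1$ (indeed $SH_1=2P_1$, and $\Krew((1))\cdot\binom{2}{1}/\binom{1}{1}=2$), so the three test values for $\mu=(1),(3),(1,1,1)$ are $2,2,20$ rather than $1,2,20$.
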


\begin{proof}
Let $m_i = m_i(\mu)$.
Then
    \begin{align*}
\OKrew(\mu) &= \frac{2^{\ell}}{\ell!}\binom{\ell}{m_1, m_3, \dots}
(n + \ell - 1)(n + \ell - 3)\cdots(n - \ell + 3) \\&= \frac{2^{\ell}}{\ell!}\binom{\ell}{m_1, m_3, \dots}
\frac{(n+\ell)!(\frac{n-\ell}{2})!}{2^{\ell}\frac{n+\ell}{2}!(n-\ell+1)!}\\&=\frac{1}{n+1} \binom{n+1}{n-\ell+1,m_1, m_3, \dots}
\frac{(n+\ell)!(\frac{n-\ell}{2})!}{\frac{n+\ell}{2}!n!}
\\&= \textrm{Krew}(\lambda)
\frac{(n+\ell)!(\frac{n-\ell}{2})!}{n!\frac{n+\ell}{2}!} =\textrm{Krew}(\lambda)
{\binom{n+\ell}{\frac{n+\ell}{2}}}{\Bigg /}{\binom{n}{\frac{n+\ell}{2}}}.
\end{align*}
\end{proof}

As mentioned previously, Stanley observed $\# \overline{\NSh}(n)$ is the $n$th \emph{large Schr\"oder number} $s_n$, which is defined by the initial conditions and recurrence
\[
s_0 = 1, \quad s_1 = 2, \quad s_{n} = 3s_{n-1} + \sum_{k=1}^{n-2} s_k s_{n-k-1}\ (n \geq 2).
\]
Therefore $\# \overline{\OSh}(n) = s_n$ as well.
The large Schr\"oder number $s_n$ enumerates \emph{Schr\"oder paths} of length $n$, which are lattice paths from $(0,0)$ to $(n,n)$ with steps $(1,0)$, $(0,1)$ and $(1,1)$ staying weakly above the line $y=x$.
There is a straightforward bijection from $\overline{\NSh}(n)$ to Schr\"oder paths extending the classical bijection from sorted parking functions to Dyck paths.
For each positive integer $i$
\begin{itemize}
    \item if $\overline{\sigma}_i = 1$ or $0$, add $\alpha_i(p)$ steps $(0,1)$, followed a step $(1,0)$;
    \item if $\overline{\sigma}_i = -1$, add $\alpha_i(p)-1$ steps $(0,1)$ followed by a step $(1,1)$.
\end{itemize}

Our proof of Theorem~\ref{t:odd} can be used to give a bijection between odd and naive sorted shifted parking functions, as we now explain.
Given a sorted naive shifted parking function $(p,\overline{\sigma})$ with auxiliary matching $\tau$, let $p'$ be obtained from $p$ by replacing exactly one $i$ with a $j$ for every $(i,j) \in \tau$.
Let $\overline{\sigma}'$ be obtained from $\overline{\sigma}$ by setting $\overline{\sigma}'_j = -\overline{\sigma}_i$ if $(i,j) \in \tau$ and $\overline{\sigma}'_j = \overline{\sigma}_j$ otherwise. Then the map
\begin{equation}
(p, \overline{\sigma}) \mapsto (p',\overline{\sigma}',\tau)
\label{eq:bijection}
\end{equation}
defines a bijection from $\overline{\NSh}$ to $\overline{\OSh}$.

We were unable to find any objects enumerated by large Schr\"oder numbers placing a similar emphasis on odd parity in its constituent components, suggesting $\overline{\OSh}(n)$ could be the first instance in a new and combinatorially distinct family of large Schr\"oder objects.

\subsection{Algebraic aspects}
\label{ss:final-algebraic}

As discussed in the introduction, a major motivation for studying the parking function symmetric function is its bigraded form
\begin{equation}
\label{eq:bigraded}
PF_n(q,t) = \sum_{p \in \Pf(n)} t^{\area(p)} q^{\dinv(p)} F_{n, \textrm{ides}(p)}.
\end{equation}
Here $\dinv$ and $\area$ are $\NN$--valued statistics on $p$ and $\textrm{ides}$ is a set-valued statistic on $p$.
The Shuffle conjecture, now a theorem~\cite{carlsson2018proof}, says $PF_n(q,t) = \nabla e_n$ where $\nabla$ is Bergeron and Garsia's celebrated \emph{nabla operator}~\cite{bergeron1999science}.
Define
\[
C_n(q,t) = \sum_{p \in \overline{\Pf}(n)} t^{\area(p)} q^{\dinv(p)}
\]
As Haglund and Garsia showed~\cite{garsia2001positivity}, $C_n(q,t)$ is related to $\nabla e_n$ by
\[
\langle \nabla e_n, e_n \rangle = C_n(q,t),
\]
where $\langle \cdot,\cdot \rangle$ is the Hall inner product on symmetric functions.
Equivalently, $C_n(q,t)$ is the coefficient of $s_{1^n}$ of $\nabla e_n$.
In~\cite{haiman2001hilbert}, Haiman showed $\nabla e_n$ is the bi-graded Frobenius character for the $\fkS_n$--module of \emph{diagonal harmonics}
\[
\mathcal{D}_n = 
(\CC[x_1,\dots,x_n]\otimes\CC[y_1,\dots,y_n])/ \mathcal{I}_n
\]
where $\mathcal{I}_n$ is the degree $\geq1$ invariants under the diagonal $\fkS_n$--action.

Applying shiftification, we obtain a $(q,t)$--analogue $SH_n(q,t) = \sh PF_n(q,t)$ of the shifted parking function symmetric function.
By Proposition~\ref{p:shiftification}, we have:
\begin{proposition}\label{p:q-t-shifted}
    For $n \geq 1$, $SH_n(q,t)$ is the bigraded Frobenius character of $(D_n \otimes \Lambda_n)$.
\end{proposition}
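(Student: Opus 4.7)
The plan is to apply Proposition~\ref{p:shiftification} bidegree-by-bidegree. First recall that by Haiman's theorem $PF_n(q,t)$ is the bigraded Frobenius character of $\mathcal{D}_n$, meaning that if $\mathcal{D}_n^{i,j}$ denotes the bihomogeneous component of $\mathcal{D}_n$ of $x$-degree $i$ and $y$-degree $j$, then each $\mathcal{D}_n^{i,j}$ is a finite-dimensional $\fkS_n$-module and
\[
PF_n(q,t) = \sum_{i,j \geq 0} t^i q^j \, \ch(\mathcal{D}_n^{i,j}).
\]

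Next, equip $\Lambda_n := \bigwedge(\CC^n)$ with the trivial bigrading, placing all of it in bidegree $(0,0)$. Then $\mathcal{D}_n \otimes \Lambda_n$ becomes a bigraded $\fkS_n$-module whose $(i,j)$--component is $\mathcal{D}_n^{i,j} \otimes \Lambda_n$. Applying Proposition~\ref{p:shiftification} to each bihomogeneous piece gives
\[
\ch\bigl(\mathcal{D}_n^{i,j} \otimes \Lambda_n\bigr) = \sh\bigl(\ch(\mathcal{D}_n^{i,j})\bigr).
\]

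Summing over $i,j$ weighted by $t^i q^j$ and using that $\sh:\Sym \to \SymP$ is a $\CC$-linear operator (acting only on symmetric-function coefficients, not on the formal variables $q,t$) yields
\[
\sum_{i,j \geq 0} t^i q^j\, \ch\bigl(\mathcal{D}_n^{i,j} \otimes \Lambda_n\bigr) = \sh\!\left(\sum_{i,j \geq 0} t^i q^j\, \ch(\mathcal{D}_n^{i,j})\right) = \sh\, PF_n(q,t) = SH_n(q,t),
\]
which is exactly the statement that $SH_n(q,t)$ is the bigraded Frobenius character of $\mathcal{D}_n \otimes \Lambda_n$.

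There is no real obstacle here: the only subtle point worth flagging is the convention that $\Lambda_n$ sits entirely in bidegree $(0,0)$, so that the bigrading on the tensor product is inherited solely from $\mathcal{D}_n$; once this is fixed, the result is a direct bidegree-wise application of Proposition~\ref{p:shiftification} combined with linearity of $\sh$.
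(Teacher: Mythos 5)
Your proof is correct and is exactly the argument the paper intends: the paper derives this proposition directly from Proposition~\ref{p:shiftification} without further comment, and your bidegree-by-bidegree application of that proposition together with linearity of $\sh$ (and the convention that $\Lambda_n$ sits in $(q,t)$-bidegree $(0,0)$) is the right way to spell it out.
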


Note, in the natural presentation of $D_n \otimes \Lambda_n$, we now have two sets of commuting variables and one set of anti-commuting variables modulo the $\fkS_n$--invariants from the diagonal action on the commuting variables.
This is somewhat reminiscent of the \emph{super diagonal coinvariant algebra} introduced in~\cite{zabrocki2019module}, and more generally the $(r,s)$--\emph{bosonoic-fermionic coinvariant algebra} $R^{(r,s)}_n$ from~\cite{bergeron2020bosonic}, which has $r$ sets of commuting variables and $s$ sets of anti-commuting variables, modulo the space of $\fkS_n$--invariants.
However, while Proposition~\ref{p:q-t-shifted} is closely related to $R^{(2,1)}_n$, we are not modding out invariants in the anticommuting variables.

We will see $SH_n(q,t)$ is closely related to Haglund's results on Schr\"oder paths.
Let $\mathrm{S}(n)$ be the set of Schr\"oder paths of size $n$ and $\mathrm{S}_k(n)$ be the subset of Schr\"oder paths with $k$ horizontal steps.
In~\cite{egge2003schroder} statistics $\area$ and $\bounce$ on Schr\"oder paths were introduced so that:
\begin{theorem}[{\cite{haglund2004proof}}]
    For $k \in [0,n]$,
\begin{equation}
    \langle\nabla e_n, h_ke_{n-k}\rangle = \sum_{P \in \mathrm{S}_k(n)} q^{\area(P)} t^{\bounce(P)}.
\label{eq:haglund}
\end{equation}
\end{theorem}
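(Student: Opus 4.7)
The plan is to derive Haglund's $(q,t)$-Schr\"oder theorem from the (now-established) shuffle theorem~\cite{carlsson2018proof} together with a statistic-preserving bijection to Schr\"oder paths. By the shuffle theorem, $\nabla e_n = PF_n(q,t)$, so using~\eqref{eq:bigraded} and linearity of the Hall inner product,
\begin{equation*}
\langle \nabla e_n, h_k e_{n-k} \rangle = \sum_{p \in \Pf(n)} t^{\area(p)} q^{\dinv(p)} \langle F_{n,\textrm{ides}(p)}, h_k e_{n-k}\rangle.
\end{equation*}
Expanding $h_k e_{n-k}$ in the fundamental quasisymmetric basis (via shuffles of a weakly increasing word of length $k$ with a strictly increasing word of length $n-k$) produces a distinguished collection $\mathcal{C}_k$ of subsets of $[n-1]$, and the inner product above reduces to the indicator $[\textrm{ides}(p) \in \mathcal{C}_k]$.

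Next, I would identify the surviving parking functions with those whose labels admit a canonical partition into a ``horizontal-step block'' of size $k$ and a ``vertical-step block'' of size $n-k$, consistent with the descent constraint. I would then construct an extension $\zeta_k$ of Haglund's classical zeta map, sending such a parking function to a Schr\"oder path in $\mathrm{S}_k(n)$ with $\dinv(p) = \area(\zeta_k(p))$ and $\area(p) = \bounce(\zeta_k(p))$. Summing over $p$ would then yield the right-hand side of~\eqref{eq:haglund}.

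The main obstacle is constructing and verifying this bijection. Even in the $k=0$ case (recovering the $(q,t)$-Catalan identity), the zeta map is already subtle, and extending it to incorporate $k$ diagonal steps demands a precise recipe for how diagonal runs in the parking function's reading word correspond to the placement of $(1,1)$-steps in the Schr\"oder path, all while the descent condition from $h_k e_{n-k}$ forces exactly the right number of such steps.

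Alternatively, a purely algebraic proof---closer in spirit to Haglund's original argument in~\cite{haglund2004proof}---would bypass the bijection by expanding both sides in the modified Macdonald basis $\{\tilde{H}_\mu\}$, invoking the eigenoperator identity $\nabla \tilde{H}_\mu = T_\mu \tilde{H}_\mu$, and checking equality via plethystic manipulations of $h_k e_{n-k}$. This route trades combinatorial delicacy for substantial Macdonald-theoretic computation.
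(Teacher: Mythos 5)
This statement is not proved in the paper at all: it is Haglund's $(q,t)$--Schr\"oder theorem, imported verbatim from~\cite{haglund2004proof} as an external citation, so there is no internal proof to compare against. Judged on its own terms, your proposal is a plan rather than a proof, and the gap you yourself flag is fatal to the first route. The entire content of the theorem, once you have reduced $\langle \nabla e_n, h_k e_{n-k}\rangle$ via the shuffle theorem to a sum of $q^{\dinv(p)}t^{\area(p)}$ over parking functions whose reading word is an appropriate shuffle, lies in producing the weight-preserving bijection $\zeta_k$ with $\dinv \mapsto \area$ and $\area \mapsto \bounce$; you assert its existence but construct nothing, and the equivalence of the ``shuffle'' side and the ``Schr\"oder path with bounce'' side is itself a nontrivial theorem, not a routine extension of the zeta map. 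There is also a smaller technical slip: $F_{n,\mathrm{ides}(p)}$ is quasisymmetric, not symmetric, so $\langle F_{n,\mathrm{ides}(p)}, h_k e_{n-k}\rangle$ is not literally an instance of the Hall inner product; the reduction to an indicator on $\mathrm{ides}(p)$ requires the standard superization/shuffle lemma, which should be invoked explicitly rather than treated as linearity.

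Your second route is historically the correct one --- Haglund's original 2004 argument predates the shuffle theorem and proceeds by plethystic recursions in the modified Macdonald basis --- but as written it is only a pointer to that computation, with none of the recursions set up or verified. Either route could in principle be completed, but as it stands neither the bijection nor the Macdonald-theoretic calculation is carried out, so the proposal does not establish~\eqref{eq:haglund}. For the purposes of this paper the appropriate move is simply to cite~\cite{haglund2004proof}, as the authors do.
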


Summing~\eqref{eq:haglund} over all values $k$, then applying Proposition~\ref{p:self-adjoint} and~\eqref{eq:P-def}, we obtain
\begin{equation}
\langle\nabla e_n, 2P_n\rangle = \langle \sh \nabla e_n, h_n\rangle = \langle SH_n(q,t), h_n\rangle = \sum_{s \in \mathrm{S}(n)} q^{\area(s)} t^{\bounce(s)}.
\label{eq:nabla-P}    
\end{equation}
Schr\"oder paths can be identified with sorted naive shifted parking functions as discussed in Section~\ref{ss:final-enumerative}.
Therefore, we can interpret~\eqref{eq:nabla-P} as a statement about sorted naive shifted parking functions with appropriate statistics.

It is natural to ask:
\begin{problem}
\label{prob:odd-nabla}
    Are there statistics $\stat_1$ and $\stat_2$ on sorted odd shifted parking functions so that
\[
\langle \nabla e_n , 2P_n \rangle = \langle SH_n(q,t),h_n\rangle =  \sum_{(p,\overline{\sigma},\tau) \in \overline{\OSh}(n)} q^{\stat_1(p,\overline{\sigma},\tau)} t^{\stat_2(p,\overline{\sigma},\tau)}?
\]
\end{problem}

Alternatively, it is plausible that $SH_n$ has a different $q,t$--analogue more naturally associated to odd shifted parking functions.
Perhaps this would be based on a bi-grading of the projective representation associated to odd shifted parking functions.
It would then be natural to ask:

\begin{problem}
    \label{prob:shifted-harmonics}
Is there another $\fkS_n$--module (or $\overline{\fkS}_n$--module) analogous to the diagonal harmonics whose bi-graded Frobenius character specializes to $SH_n$ when $q = t =1$?
\end{problem}

It would also be interesting to find combinatorial interpretations for the expansion of $SH_n(q,t)$ into bases in $\SymP$.
Depending on the choice of basis, this could be quite challenging since finding explicit expansions of $PF_n(q,t)$ is an open problem for most bases of $\Sym$.

Though finding the $e$--expansion of $\nabla e_n$ is an open problem,
by setting $q = 1$ we have
\[
PF_n(1,t) = \sum_{p \in \overline{\Pf}(n)} t^{\area(p)} e_{\lambda(p)}.
\]
Therefore, applying shiftification gives
\begin{equation}
\label{eq:t-sh}
SH_n(1,t) = \sum_{(p,\overline{\sigma}) \in \overline{\NSh}(n)} t^{\area(p)} V_{\lambda(p)}.
\end{equation}
The area of a parking function can be computed as
\[
\area(p) = \sum_{i=1}^n (p_i - i) = \left(\sum_{i=1}^n (n{+}1{-}i)\cdot \alpha_i(p)\right) - \binom{n}{2}.
\]
In the bijection from $\overline{\NSh}(n) \to \overline{\OSh}(n)$ defined in~\eqref{eq:bijection}, we see for $(p,\overline{\sigma}) \in \overline{\NSh}(n)$ with auxiliary matching $\tau$ mapping to $(p',\overline{\sigma}',\tau) \in \overline{\OSh}(n)$ that
\begin{equation}
    \label{eq:odd-area}
\area(p) = \area(p') + \sum_{(i,j) \in \tau} j-i.
\end{equation}
By defining $\area_o(p',\overline{\sigma}',\tau)$ as the righthand side of~\eqref{eq:odd-area}, Theorem~\ref{t:odd} extends to the $q=1$ setting.
\begin{proposition}
    \label{p:t-odd-sh}
    \[
    SH_n(1,t) = \sum_{(p,\overline{\sigma},\tau)} t^{\area_o(p,\overline{\sigma},\tau)} V_{\lambda(p)}.
    \]
\end{proposition}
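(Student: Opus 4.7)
The plan is to extend the combinatorial proof of Theorem~\ref{t:odd} to the $t$-graded setting. Starting from~\eqref{eq:t-sh}, which expresses $SH_n(1,t) = \sum_{(p,\overline{\sigma}) \in \overline{\NSh}(n)} t^{\area(p)} V_{\lambda(p)}$, apply the bijection~\eqref{eq:bijection} $(p,\overline{\sigma}) \mapsto (p',\overline{\sigma}',\tau)$ between $\overline{\NSh}(n)$ and $\overline{\OSh}(n)$. By~\eqref{eq:odd-area} together with the definition of $\area_o$ as its right-hand side, $t^{\area(p)} = t^{\area_o(p',\overline{\sigma}',\tau)}$, so the $t$-weight transports cleanly to the odd side.

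The remaining task is to relate the $V_{\lambda(p)}$ factor (naive shape) on the left to the $V_{\lambda(p')}$ factor (odd shape) appearing in the claim. The bijection sends a concentrated even part $2k$ at the left endpoint $i$ of a matched pair to the minimal odd refinement $(2k-1,1)$ at $(i,j)$, so this conversion does not hold term by term. The fix, mirroring Section~\ref{s:main}, is to group both sums by their image in $\Gar(n)$ under $\varphi$ and $\varphi_o$. For each garage $G$ both sums factor over the matched pairs of $G$, and by Propositions~\ref{p:garage-R} and~\ref{p:odd-R} their ungraded per-garage contributions both equal $R_{\lambda(G)}$. For a matched pair $(i,j)$ with combined garage multiplicity $2k$, the $t$-graded contributions are
\[
\sum_{\ell=1}^{k} t^{(2k-2\ell)(i-j)} P_{2\ell} P_{2k-2\ell}
\qquad \text{and} \qquad
\sum_{m=1}^{k} t^{2(k-m)(i-j)} P_{2m-1} P_{2k-2m+1},
\]
the latter after absorbing the $+(j-i)$ correction in $\area_o$.

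The main obstacle is verifying that these per-pair $t$-weighted generating polynomials coincide in $\SymP$---a graded refinement of~\eqref{eq:P-relation}. At $t=1$ the identity reduces exactly to~\eqref{eq:P-relation} and hence recovers Theorem~\ref{t:odd}, so the real content is tracking how the area bookkeeping, in particular the $\sum(j-i)$ shift in $\area_o$, aligns the exponents on both sides. Once this is in place, multiplying the per-pair identities over the matched pairs of each garage and summing over all garages reassembles~\eqref{eq:t-sh} as the claimed expansion over $\overline{\OSh}(n)$.
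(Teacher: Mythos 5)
You have correctly located the real content of this statement: the bijection~\eqref{eq:bijection} transports the $t$-weight but not the $V$-factor (the shape changes from $2k$ to $(2k-1,1)$ at each matched pair), so one must regroup by garages and verify a $t$-graded refinement of~\eqref{eq:P-relation} for each matched pair. This is already more careful than the paper's own proof, which simply asserts the result follows from~\eqref{eq:t-sh} and~\eqref{eq:odd-area}. However, the step you defer --- ``verifying that these per-pair $t$-weighted generating polynomials coincide'' --- is exactly where the argument breaks, and it cannot be repaired: the graded refinement of~\eqref{eq:P-relation} is false. Take $k=2$ and set $u = t^{2(i-j)}$; your two polynomials become $P_4 + uP_2^2$ and $(1+u)P_1P_3$, and equality for all $u$ would force $P_4 = P_1P_3$ and $P_2^2 = P_1P_3$ separately. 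But $P_1 = p_1$, $P_2 = p_1^2$, $P_3 = \tfrac13(2p_1^3+p_3)$ and $P_4 = \tfrac13(p_1^4+2p_1p_3)$, so $P_2^2 = p_1^4 \neq \tfrac13(2p_1^4+p_1p_3) = P_1P_3$; only the sum $P_4 + P_2^2 = 2P_1P_3$ holds, which is why the ungraded Theorem~\ref{t:odd} survives while its graded refinement does not.

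Moreover the failure is not an artifact of your grouping: the proposition is false as stated. At $n=4$, normalizing $\area(1,2,3,4)=0$, one has $SH_4(1,t) = 2t^6P_4 + 4(t^5+t^4+2t^3)P_3P_1 + 4(t^4+t^2)P_2^2 + 8(t^3+2t^2+3t)P_2P_1^2 + 16P_1^4$; substituting $P_2 = P_1^2$ and $P_4 = 2P_3P_1 - P_1^4$ to expand in the basis $\{V_\lambda : \lambda \vdash 4\ \text{odd}\}$ gives coefficient $-2t^6 + 4t^4 + 8t^3 + 20t^2 + 24t + 16$ on $V_{(1,1,1,1)}$. A polynomial with a negative coefficient cannot equal $\sum t^{\area_o}$ over a subset of $\overline{\OSh}(4)$ --- indeed, direct enumeration of the $(p,\overline{\sigma},\tau)$ with $\lambda(p) = (1^4)$ yields $2t^4 + 8t^3 + 20t^2 + 24t + 16$, agreeing only at $t=1$. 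So the honest conclusion of your (correct) reduction is that no statistic on $\overline{\OSh}(n)$ can make this identity hold in $\SymP[t]$; the statement would have to be weakened, e.g.\ to an identity between multisets of $(t\text{-weight}, V_\lambda)$ pairs taken modulo the relations~\eqref{eq:P-relation}, or abandoned in favor of the naive version~\eqref{eq:t-sh}.
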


\begin{proof}
    The result follows from~\eqref{eq:t-sh} and~\eqref{eq:odd-area}.
\end{proof}
    
We suspect the relatively simple combinatorial description  of $SH_n(1,t)$ in the $V_\lambda$ basis is evidence of some deeper underlying algebraic story.
Our hope is that odd shifted parking functions will lead to further developments in the story of diagonal harmonics and related algebras.

\bibliographystyle{plain}
\bibliography{references.bib}

\begin{thebibliography}{10}

\bibitem{bergeron1999science}
F.~Bergeron and A.~M. Garsia.
\newblock Science fiction and {Macdonald}'s polynomials.
\newblock In {\em Algebraic methods and \(q\)-special functions}, pages 1--52. Providence,~RI: American Mathematical Society, 1999.

\bibitem{bergeron2020bosonic}
Fran{\c{c}}ois Bergeron.
\newblock The bosonic-fermionic diagonal coinvariant modules conjecture.
\newblock {\em arXiv preprint arXiv:2005.00924}, 2020.

\bibitem{carlsson2018proof}
Erik Carlsson and Anton Mellit.
\newblock A proof of the shuffle conjecture.
\newblock {\em Journal of the American Mathematical Society}, 31(3):661--697, 2018.

\bibitem{egge2003schroder}
Eric~S Egge, Jim Haglund, Kendra Killpatrick, and Darla Kremer.
\newblock A {S}chr{\"o}der generalization of {H}aglund's statistic on {C}atalan paths.
\newblock {\em The Electronic Journal of Combinatorics}, page R16, 2003.

\bibitem{garsia2001positivity}
AM~Garsia and J~Haglund.
\newblock A positivity result in the theory of {M}acdonald polynomials.
\newblock {\em Proceedings of the National Academy of Sciences}, 98(8):4313--4316, 2001.

\bibitem{haglund2004proof}
Jim Haglund.
\newblock A proof of the $q, t$-{S}chr{\"o}der conjecture.
\newblock {\em International Mathematics Research Notices}, 2004(11):525--560, 2004.

\bibitem{haiman2001hilbert}
Mark Haiman.
\newblock Hilbert schemes, polygraphs and the {M}acdonald positivity conjecture.
\newblock {\em Journal of the American Mathematical Society}, 14(4):941--1006, 2001.

\bibitem{haiman1994conjectures}
Mark~D Haiman.
\newblock Conjectures on the quotient ring by diagonal invariants.
\newblock {\em Journal of Algebraic Combinatorics}, 3(1):17--76, 1994.

\bibitem{MR1007885}
Tadeusz J\'ozefiak.
\newblock Characters of projective representations of symmetric groups.
\newblock {\em Exposition. Math.}, 7(3):193--247, 1989.

\bibitem{macdonald1998symmetric}
Ian~Grant Macdonald.
\newblock {\em Symmetric functions and Hall polynomials}.
\newblock Oxford university press, 1998.

\bibitem{oeis}
{OEIS Foundation Inc.}
\newblock The {O}n-{L}ine {E}ncyclopedia of {I}nteger {S}equences, 2025.
\newblock Published electronically at \url{http://oeis.org}.

\bibitem{sagan2013symmetric}
Bruce~E Sagan.
\newblock {\em The Symmetric Group: Representations, Combinatorial Algorithms, and Symmetric Functions}, volume 203.
\newblock Springer Science \& Business Media, 2013.

\bibitem{stanley2024shifted}
Richard~P Stanley.
\newblock A shifted parking function symmetric function.
\newblock {\em arXiv preprint arXiv:2405.02164}, 2024.

\bibitem{stembridge}
John~R Stembridge.
\newblock Shifted tableaux and the projective representations of symmetric groups.
\newblock {\em Advances in Mathematics}, 74(1):87--134, 1989.

\bibitem{yan2015parking}
Catherine~H Yan.
\newblock Parking functions.
\newblock {\em Handbook of enumerative combinatorics}, 440:835--893, 2015.

\bibitem{zabrocki2019module}
Mike Zabrocki.
\newblock A module for the delta conjecture.
\newblock {\em arXiv preprint arXiv:1902.08966}, 2019.

\end{thebibliography}

\end{document}